\documentclass[notitlepage]{amsart}




\newtheorem{theorem}{Theorem}[section]
\newtheorem{corollary}[theorem]{Corollary}
\newtheorem{lemma}[theorem]{Lemma}
\newtheorem{proposition}[theorem]{Proposition}

\theoremstyle{definition}
\newtheorem{definition}[theorem]{Definition}

\theoremstyle{remark}
\newtheorem{remark}[theorem]{Remark}
\newtheorem{example}[theorem]{Example}

\usepackage{amsthm}           	    	              	
\usepackage{amsmath}            	    	            
\usepackage{amssymb}                	    	        
\usepackage{enumerate}									
	
\usepackage{csquotes} 		                          	

\usepackage[T1]{fontenc}                                

\usepackage[a4paper]{geometry}                          
\usepackage{hyperref}

\usepackage{tikz-cd}                                    
\usepackage[all]{xy}                                    
\usepackage{pgf,tikz}
\usepackage{mathrsfs}
\usetikzlibrary{arrows}

\newcommand{\End}{\mathrm{End}}
\newcommand{\Hom}{\mathrm{Hom}}
\newcommand{\Ext}{\mathrm{Ext}}
\newcommand{\Fac}{\mathrm{Fac}}
\newcommand{\Sub}{\mathrm{Sub}}
\newcommand{\proj}{\mathrm{proj}}

\newcommand{\F}{\mathcal{F}}

\newcommand{\X}{\mathcal{X}}
\newcommand{\Y}{\mathcal{Y}}

\newcommand{\f}{\mathrm{f}}
\newcommand{\modu}{\mathrm{mod}}
\newcommand{\Mat}{\mathrm{Mat}}
\newcommand{\udim}{\underline{\mathrm{dim}}}
\newcommand{\CoKer}{\mathrm{CoKer}}

\newcommand{\add}{\mathrm{add}}

\renewcommand{\t}{\mathrm{t}}

\newcommand{\rep}[1]{%
  {%
    \tiny%
    \begin{matrix}%
      #1%
    \end{matrix}%
  }%
}


\title{Stratifying systems and $g$-vectors}
\author{Octavio Mendoza, Corina S\'aenz and Hipolito Treffinger}

\begin{document}

\thanks{2020 {\bf{Mathematics Subject Classification}}. Primary  18G20, 18E40, 16D10. Secondary 16E99.\\
{\bf Keywords}: Stratifying systems; $g$-vectors; $\tau$-rigid modules; Cartan matrices and groups; weakly triangular algebras.}
\maketitle

\begin{abstract}
    In this paper we study the Cartan matrix associated to the Ext-projective stratifying system induced by a basic and $\tau$-rigid object $M$ in $\modu\,(A)$ by means of the $g$-vectors of the indecomposable direct summands of $M$.
   In particular we show that the Cartan group of a stratifying system associated to a $\tau$-rigid module can be calculated directly using these vectors.
    Moreover we characterise the stratifying systems coming from $\tau$-rigid modules that have a diagonal Cartan matrix.
\end{abstract}

\section{Introduction}

The notion of an Ext-injective stratifying system was introduced in \cite{Erdmann2003} as an axiomatisation of the standard objects of the standardly stratified algebras defined in \cite{Dlab}.
It was quickly realised that Ext-injective stratifying systems are equivalent to the notions of Ext-projective stratifying systems \cite{Marcos2005a} and what is now known simply as stratifying systems \cite{Marcos2004}.
Since then, stratifying systems were deeply studied in \cite{E05, HP06, Li13, MaMeSa06, Mes16, Sa18, SK09}. Throughout this paper, $A$ will denote to a basic finite dimensional  $k$-algebra over an algebraically closed filed $k.$ We will work on the abelian category $\modu(A)$ of all the finitely generated left $A$-modules. Given a positive integer $t,$ we denote by $[1,t]$  the set of all the natural numbers between $1$ and $t,$ and  it is also considered the natural order $\leq$ on the set $[1,t].$
\

The notion of  Ext-projective stratifying system in $\modu(A)$ is defined in \cite{Marcos2005a}. We recall this notion as follows.

\begin{definition}\cite[Definition 2.1]{Marcos2005a}\label{def:epss}
An Ext-projective stratifying system in $\modu(A),$ of size $t,$ is a triple $(\Theta,\underline{Q}, \leq ),$ where $\Theta:=\{\Theta(i)\}_{i=1}^t$ is a family of non-zero $A$-modules, $\underline{Q}:=\{Q(i)\}_{i=1}^t$ is a family of indecomposable $A$-modules and $\leq$ is the natural order on $[1,t]$ satisfying the following conditions:
\begin{enumerate}
\item[(a)] $\mathrm{Hom}_A(\Theta(j),\Theta(i))=0$ for $j> i;$
\item[(b)] for each $i\in [1,t]$, there is an exact sequence 
$0\to U(i)\xrightarrow{\alpha_i} Q(i)\xrightarrow{\beta_i}\Theta(i) \to 0$
in $\modu\,(A)$ such that  $U(i)$ is filtered by the set $\{\Theta(j): j>i\}$;
\item[(c)] $\Ext^1_A(Q,X)=0$ where $Q:=\bigoplus_{i=1}^tQ(i)$ and $X$ is an $A$-module filtered by $\{\Theta(i)\}_{i=1}^t$.
\end{enumerate}
\end{definition}

Although it has been shown that stratifying systems are objects having very nice properties, until very recently there was no systematic method to construct stratifying systems in the module category of a given algebra $A$. 
This problem has been addressed in \cite{Mendoza2019} and \cite{Treffinger2021}  using tools from $\tau$-tilting theory and higher homological algebra, respectively.

It was shown in \cite{MMS2019} that it can be associated to every Ext-projective stratifying system $(\Theta,\underline{Q}, \leq )$ two invariants $C_\Theta$ and $G_\Theta$, known as the \textit{Cartan matrix} and the \textit{Cartan group} of $\Theta$, respectively (see Section~\ref{sec:stratifying}). 
In this paper,  we study the Cartan matrix $C_{\Delta_M}$ and the Cartan group $G_{\Delta_M}$  for a stratifying system $\Delta_M$ associated to a (basic) 
$\tau$-rigid module $M\in\modu(A).$
In particular, we show that the matrix $C_{\Delta_M}$ can be obtained using the  $g$-matrix $G^M$ (formed by the $g$-vectors of the indecomposable direct summands of $M$), the $M$-standard matrix $S^M$ (formed by the dimension vectors of the $M$-standard modules $\Delta_M$) and the $M$-residual matrix $R^M$ (see Section~\ref{sec:mainresults}). This result can be summarized as follows, for the complete version, see 
Theorem~\ref{MTM} and Proposition \ref{MinFD}.

\begin{theorem}
Let $M\in\modu\,(A)$ be basic and $\tau$-rigid  with a TF-admissible decomposition $M= \bigoplus_{i=1}^t M_i,$ and let $(\{\Delta_M\}_{i=1}^t, \underline{Q}, \leq )$ be its Ext-projective stratifying system. Then,  
$$C_{\Delta_M}=(G^M)^{tr}\,S^M+R^M\quad\text{and}\quad 
|G_{\Delta_M}| = \prod_{i=1}^t [(G^M)^{tr}\,S^M]_{i,i}$$
Moreover, if $M$ is filtered by the family $\Delta_M,$ then $R^M=0$ and $Q(i)\simeq M_i$ $\forall\,i.$
\end{theorem}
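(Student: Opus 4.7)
The plan is to unfold $C_{\Delta_M}$ entry by entry as the multiplicity $[Q(i):\Delta_M(j)]$ of $\Delta_M(j)$ in any $\Delta_M$-filtration of $Q(i)$ (well-defined thanks to axiom (c) of Definition~\ref{def:epss}), and then to show that this multiplicity decomposes as $[(G^M)^{tr}S^M]_{i,j}+R^M_{i,j}$. The main tool is the Euler-type identity coming from a minimal projective presentation: if $P^{M_i}_1\to P^{M_i}_0\to M_i\to 0$ is such a presentation, then $g^{M_i}=[P^{M_i}_0]-[P^{M_i}_1]$ in $K_0(\proj\,A)$, so for every $N\in\modu\,(A)$,
$$\dim\Hom_A(P^{M_i}_0,N)-\dim\Hom_A(P^{M_i}_1,N)=\sum_k g^{M_i}_k\,\dim N_k.$$
Applied with $N=\Delta_M(j)$ the right-hand side is precisely $[(G^M)^{tr}S^M]_{i,j}$. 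The heart of the proof is to compare this alternating sum with the filtration multiplicity $[Q(i):\Delta_M(j)]$ by means of the canonical comparison between $M_i$ and $Q(i)$ induced by the projective cover, using the explicit description of the stratifying system attached to the TF-admissible decomposition of $M$ as in \cite{Mendoza2019}. The discrepancy will be recognised as an intrinsic non-negative integer matrix which we define to be $R^M$.

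For the statement about the Cartan group, I would use the standard identification $|G_{\Delta_M}|=|\det C_{\Delta_M}|$ (when the determinant is non-zero) and then show that $(G^M)^{tr}S^M$ is upper triangular with strictly positive diagonal. Upper triangularity should follow by combining the TF-admissible ordering of $M=\bigoplus M_i$ with axiom (a), which constrains the supports of the dimension vectors of the $\Delta_M(j)$ relative to the $g$-vectors $g^{M_i}$. Positivity of the diagonal entry $[(G^M)^{tr}S^M]_{i,i}$ reduces to the fact that $\Delta_M(i)$ is a non-zero quotient of $M_i$ with $\Ext^1_A(M_i,\Delta_M(i))=0$, so that the alternating sum equals $\dim\Hom_A(M_i,\Delta_M(i))>0$. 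Finally, verifying that $R^M$ is strictly upper triangular ensures that $C_{\Delta_M}$ has the same diagonal as $(G^M)^{tr}S^M$, whence $|\det C_{\Delta_M}|=\prod_i[(G^M)^{tr}S^M]_{i,i}$.

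For the \emph{moreover} part, if $M$ is filtered by $\Delta_M$ then axiom (c) forces $\Ext^1_A(Q,M)=0$, while $\tau$-rigidity of $M$ gives $\Ext^1_A(M,M)=0$; in particular $M$ lies in, and is Ext-projective inside, the extension-closure of $\Delta_M$. Since the $Q(i)$ are characterised as the indecomposable Ext-projective objects of this subcategory, a Krull--Schmidt comparison yields $Q(i)\simeq M_i$ for all $i$ (up to reindexing). Consequently the canonical comparison $M_i\to Q(i)$ becomes an isomorphism and the residual sequence defining $R^M$ collapses, giving $R^M=0$.

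The main obstacle I foresee lies in pinning down the residual matrix $R^M$ intrinsically and verifying its strict upper-triangularity. The $g$-vector identity is a routine Euler-characteristic computation, but translating an alternating sum of Hom-dimensions into a bona fide filtration multiplicity $[Q(i):\Delta_M(j)]$, and doing so compatibly with the inductive structure of the TF-admissible decomposition, is where the bookkeeping is delicate; in particular one must check that the discrepancy $[Q(i):\Delta_M(j)]-\sum_k g^{M_i}_k\dim\Delta_M(j)_k$ vanishes whenever $j\leq i$ and yields non-negative entries for $j>i$, which is what makes $R^M$ a genuine and well-defined non-negative integer matrix.
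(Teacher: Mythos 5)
Your proposal has several genuine gaps, the most serious of which is a misidentification of the Cartan matrix entries. In this paper $[C_{\Delta_M}]_{i,j}$ is \emph{defined} as $\dim_k\Hom_A(Q(i),\Delta_M(j))$, not as the filtration multiplicity $[Q(i):\Delta_M(j)]$. These coincide only when $\End_A(\Delta_M(j))\simeq k$, which fails in general (the second example of the paper has diagonal entries equal to $2$). Basing the whole argument on filtration multiplicities would force you to additionally prove a reciprocity-type statement relating the two, and that statement is not part of the claim.

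A second gap is that you treat $R^M$ as an object \emph{to be defined} as the discrepancy between $[C_{\Delta_M}]_{i,j}$ and the Euler-pairing term, with the expectation that it be a non-negative matrix. But $R^M$ is a fixed piece of the theorem's data, with the explicit definition $[R^M]_{i,j}=\dim_k\Hom_A(U(i),\Delta_M(j))-\dim_k\Hom_A(K(i),\Delta_M(j))$ in terms of the two canonical short exact sequences $\eta_i:0\to U(i)\to Q(i)\to\Delta_M(i)\to 0$ and $\varepsilon_i:0\to K(i)\to M_i\to\Delta_M(i)\to 0$. The burden is to show that \emph{this} matrix is the discrepancy, not to declare the discrepancy to be $R^M$. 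Moreover your expectation of non-negativity is false: in the paper's second example $R^A$ has entries equal to $-1$.

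The route the paper actually takes is to compare $\eta_i$ and $\varepsilon_i$ directly. Since $M_i$ is Ext-projective in $\Fac(M)\supseteq\F(\Delta_M)$, one gets a morphism of short exact sequences from $\varepsilon_i$ to $\eta_i$ which is the identity on $\Delta_M(i)$, and the left square is a pushout; this produces an exact sequence $0\to K(i)\to U(i)\oplus M_i\to Q(i)\to 0$. Applying $\Hom_A(-,\Delta_M(j))$ and using that $\Ext^1_A(M_i,\Delta_M(j))=0$ (by $\tau$-rigidity and the dual Auslander--Reiten formula) yields a short exact sequence of $\Hom$-spaces whose dimension count is precisely $[C_{\Delta_M}]_{i,j}=\dim_k\Hom_A(M_i,\Delta_M(j))+[R^M]_{i,j}$. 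The Euler-pairing computation you outline is the correct second ingredient, but the crucial vanishing that turns $\langle g^{M_i},\udim\Delta_M(j)\rangle$ into $\dim_k\Hom_A(M_i,\Delta_M(j))$ is $\Hom_A(\Delta_M(j),\tau M_i)=0$, which follows from $\Delta_M(j)\in\Fac(M)$ and $\tau$-rigidity of $M$ --- you should state this explicitly rather than only gesturing at $\Ext$-vanishing. For the Cartan group one does not need a determinant argument: the upper triangularity of $C_{\Delta_M}$ and the identity $|G_{\Delta_M}|=\prod_i[C_{\Delta_M}]_{i,i}$ are already available from the cited lemma, and since $R^M$ is strictly upper triangular (from part (a) of the comparison lemma applied with $i\geq j$) the diagonals of $C_{\Delta_M}$ and $(G^M)^{tr}S^M$ agree. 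Finally, the ``moreover'' clause does not come from a Krull--Schmidt count of Ext-projectives; it follows from the stronger fact that when $M\in\F(\Delta_M)$ the sequences $\eta_i$ and $\varepsilon_i$ are themselves isomorphic (proved in the cited source), which simultaneously gives $Q(i)\simeq M_i$, $U(i)\simeq K(i)$, and hence $R^M=0$ term by term.
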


Then we apply this theorem to characterise algebras $A$ and stratifying systems in $\modu(A)$ coming from $\tau$-tilting modules having a diagonal Cartan matrix, generalising \cite[Theorem~5.9]{MMS2019}. In what follows, we summarize this result. For more details, see Theorem~\ref{thm:diagonal}.

\begin{theorem}
For a basic $\tau$-tilting module $M\in\modu\,(A)$ with a TF-admissible decomposition $M= \bigoplus_{i=1}^n M_i$ such that 
$M\in\F(\Delta_M),$  the following statements are equivalent.
\begin{itemize}
\item[$\mathrm{(a)}$] The matrix $C_{\Delta_M}$ is diagonal.
\item[$\mathrm{(b)}$] $\Hom_A(M_i, M_j)=0$ for $i<j.$
\item[$\mathrm{(c)}$] $\Hom_A(M_i, \Delta_M(j))=0$ for $i<j.$
\end{itemize}
Moreover, if one of the above conditions holds true, then $M\simeq {}_AA$ and 
$A$ is a weakly triangular algebra.
\end{theorem}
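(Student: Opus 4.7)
The plan is to leverage Theorem~\ref{MTM}: since $M\in\F(\Delta_M)$, Proposition~\ref{MinFD} gives $R^M=0$ and $Q(i)\simeq M_i$, so
$$C_{\Delta_M}=(G^M)^{tr}\,S^M,\qquad [C_{\Delta_M}]_{i,j}=\langle g^{M_i},\,\udim\Delta_M(j)\rangle.$$
The Auslander-Reiten formula for $\tau$-rigid modules yields
$$\langle g^{M_i},\,\udim N\rangle=\dim_k\Hom_A(M_i,N)-\dim_k\Hom_A(N,\tau M_i)$$
for every $N\in\modu\,A$. Applied to $N=\Delta_M(j)$: since $\Delta_M(j)$ is a quotient of $M_j\in\Fac(M)$ and $\Hom_A(\Fac(M),\tau M)=0$ by $\tau$-rigidity of $M$, the second term vanishes, so $[C_{\Delta_M}]_{i,j}=\dim_k\Hom_A(M_i,\Delta_M(j))$.

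I would then observe that the strictly lower-triangular part of $C_{\Delta_M}$ is zero a priori: for $i>j$, axiom (b) of Definition~\ref{def:epss} gives a $\Delta_M$-filtration of $M_i$ with factors $\Delta_M(k)$, $k\geq i>j$, and axiom (a) gives $\Hom_A(\Delta_M(k),\Delta_M(j))=0$ for such $k$; a devissage using contravariant $\Hom$ yields $\Hom_A(M_i,\Delta_M(j))=0$. So $C_{\Delta_M}$ is diagonal if and only if its strictly upper-triangular entries vanish, that is, if and only if $(c)$ holds, and $(a)\Leftrightarrow(c)$.

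For $(c)\Rightarrow(b)$, the analogous covariant devissage on the $\Delta_M$-filtration of $M_j$ (whose factors have index $k\geq j>i$) combined with $(c)$ gives $\Hom_A(M_i,M_j)=0$. For $(b)\Rightarrow(c)$, I would apply $\Hom_A(M_i,-)$ to $0\to U(j)\to M_j\to\Delta_M(j)\to 0$: since $U(j)\in\F(\Delta_M)$, axiom (c) of Definition~\ref{def:epss} gives $\Ext^1_A(M_i,U(j))=0$, so $\Hom_A(M_i,\Delta_M(j))$ is a quotient of $\Hom_A(M_i,M_j)=0$ and hence vanishes.

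For the ``moreover'' part, combining $(c)$ with the surjection $M_i\twoheadrightarrow\Delta_M(i)$ gives $\Hom_A(\Delta_M(i),\Delta_M(j))\hookrightarrow\Hom_A(M_i,\Delta_M(j))=0$ for $i<j$; together with axiom (a) this shows that $\{\Delta_M(i)\}_{i=1}^n$ is pairwise $\Hom$-orthogonal. My plan is then to use the identity $(G^M)^{tr}S^M=C_{\Delta_M}$ (diagonal) together with the unimodularity of $G^M$ (from $M$ being $\tau$-tilting) and the TF-admissibility of the decomposition to force each $M_i$ to be an indecomposable projective, so that $M\simeq{}_AA$. This is the main obstacle: ruling out non-projective summands of a $\tau$-tilting module requires a careful analysis combining the structural properties of its $g$-vectors with the non-negativity of the $\udim\Delta_M(j)$ and the pairwise $\Hom$-vanishing above. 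Once $M\simeq{}_AA$ is established, condition (b) reads $\Hom_A(P(\sigma(i)),P(\sigma(j)))=0$ for $i<j$, where $\sigma$ is the permutation identifying $M_i$ with $P(\sigma(i))$, which is precisely the defining property of $A$ being a weakly triangular algebra.
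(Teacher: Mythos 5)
Your derivation of the equivalences $(a)\Leftrightarrow(b)\Leftrightarrow(c)$ is correct; it essentially re-proves, in this special case, the general Proposition~\ref{PEDiagCtheta} that the paper invokes directly (after noting via Theorem~\ref{MTM} and Proposition~\ref{MinFD} that $Q(i)\simeq M_i$, $U(i)\simeq K(i)$ and $C_{\Delta_M}=(G^M)^{tr}S^M$). The devissage arguments and the use of axiom (c) of Definition~\ref{def:epss} for $(b)\Rightarrow(c)$ are fine.

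The genuine gap is the ``moreover'' clause, which you yourself flag as ``the main obstacle'' and for which you only offer a strategy (unimodularity of $G^M$ plus non-negativity of dimension vectors) rather than an argument. I do not see how that linear-algebraic route would close: the identity $(G^M)^{tr}S^M=C_{\Delta_M}$ being diagonal, together with $\det(G^M)=\pm 1$, constrains $S^M$ but does not by itself distinguish projective from non-projective $\tau$-rigid summands. The paper's actual argument is of a quite different, mutation-theoretic flavour. From TF-admissibility combined with $(b)$ one gets $M_i\notin\Fac\bigl(\bigoplus_{j\neq i}M_j\bigr)$ for every $i$ (any surjection from copies of the $M_j$, $j\neq i$, onto $M_i$ would have to use only the $M_j$ with $j>i$, contradicting TF-admissibility). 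Hence, for the almost complete $\tau$-tilting module $N_i:=\bigoplus_{j\neq i}M_j$, one has $\Fac(M)={}^\perp(\tau N_i)\supsetneq\Fac(N_i)$, i.e.\ the mutation of $M$ at each $M_i$ is descendent. If $\Fac(M)\neq\modu(A)$, then by \cite[Theorem 3.1]{DIJ} some mutation of $M$ would be ascendent, a contradiction; therefore $\Fac(M)=\modu(A)$ and $M\simeq{}_AA$. The conclusion that $A$ is weakly triangular then follows (the paper appeals to $M_i\simeq Q(i)$ and \cite[Remark 4.5]{MMS2019}; your direct reading of $(b)$ once $M\simeq{}_AA$ amounts to the same thing). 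You should replace your sketched linear-algebra plan with this mutation argument.
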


\smallskip
The paper is organised as follows. 
In Section~\ref{sec:setting} we establish the setting and notation in which we work and we recall some basic representation theory.
Then, in Section~\ref{sec:stratifying} we recall some theory on stratifying systems and we prove some preparatory results. 
Later, in Section~\ref{sec:tau-tilting} we give a brief overview of the necessary results on $\tau$-tilting theory that we need in  
Section~\ref{sec:mainresults}  where we state and prove the main results of this paper. Finally, in Section~\ref{Examples} we give some examples illustrating Theorem~\ref{MTM}.

\subsection*{Ackowledgements}
Most of the mathematical content of this paper was performed while the third named author visited the other two authors at the UNAM in M\'exico in February 2020. 
We hope that in the near future the sanitary conditions worldwide will allow for more in person collaborations.
\

The authors thank the research project PAPIIT-Universidad Nacional Aut\'onoma de M\'exico IN100520.
\
H.~Treffinger was partially funded by the Deutsche Forschungsgemeinschaft (DFG, German Research Foundation) under Germany's Excellence Strategy Programme -- EXC-2047/1 -- 390685813.
H.~Treffinger is also supported by the European Union’s Horizon 2020 research and innovation programme under the Marie Sklodowska-Curie grant agreement No 893654.
He was also partially supported by the EPSRC through the Early Career Fellowship, EP/P016294/1.
H.~Treffinger would like to thank the Isaac Newton Institute for Mathematical Sciences, Cambridge, for support and hospitality during the programme Cluster Algebras and Representation Theory where work on this paper was undertaken. 
This work was supported by EPSRC grant no EP/K032208/1 and the Simons Foundation.

\section{Setting and background}\label{sec:setting}

For $M\in\modu\,(A),$ we denote by $rk(M)$ the number of pairwise non-isomorphic indecomposable direct summands of $M.$ Finally, we say that $M\in\modu\,(A)$ is {\it basic} if there is a decomposition $M=\bigoplus_{i=1}^t M_i,$ with $M_i$ indecomposable for every $i$ and $M_i\not\simeq M_j$ if $i\neq j.$ 

Given an algebra $A$, we denote by $M\twoheadrightarrow N$ an epimorphism from $M$ to $N$ in $\modu\,(A);$ and for each $M\in\modu\,(A),$ we consider the class 
\begin{center} $\Fac(M):=\{X \in\modu\,(A)\; :\;\exists\, M^{n} \twoheadrightarrow X \text{ for some $n\in\mathbb{N}$}\}.$\end{center}
Dually, the arrow $\hookrightarrow$ stands for a monomorphism in $\modu\,(A),$ and we have the class 
$$\Sub(M):=\{X \in\modu\,(A)\; :\;\exists\, X \hookrightarrow M^{n} \text{ for some $n\in\mathbb{N}$}\}.$$
For any subclass $\X\subseteq\modu\,(A),$ we have the right $\Hom$-perpendicular complement of $\X$
\begin{center}
 $\X^\perp:=\{M\in\modu\,(A)\;:\;\Hom_A(-,M)|_\X)=0\}.$   
\end{center}
Dually,  we have the left $\Hom$-perpendicular complement ${}^\perp\X$ of $\X.$ For a single element class $\X=\{M\},$ we just write $M^\perp$ and ${}^\perp M.$ 
It is said that $N\in\modu\,(A)$ admits an {\it $\X$-filtration} if there is a chain of submodules 
$0=M_0\subseteq M_1\subseteq\ldots\subseteq M_{n-1}\subseteq M_n=M$ satisfying that each quotient 
$M_i/M_{i-1}\simeq X_i\in\X.$ 
The class of all the  $N\in\modu\,(A)$ which admits an $\X$-filtration will be denoted by $\F(\X).$
\medskip

Let $\X\subseteq\modu\,(A)$ and $M\in\modu\,(A).$ 
A morphism $f:X\to M$ in $\modu\,(A)$ is called $\X$-precover (or right $\X$-approximation) of $M$ if $X\in\X$ and $\Hom_A(X',f):\Hom_A(X',X)\to\Hom_A(X',M)$ is surjective, for any $X'\in\X$. 
Moreover, if the equality $fh=f$ holds true only for an automorphism $h:X\to X,$ it is said that the $\X$-precover is an $\X$-cover. 
The class $\X$ is precovering (or contravariantly finite) if any $N\in\modu\,(A)$ admits an $\X$-precover. 
Dually, we have the notions of $\X$-preenvelope (or left $\X$-approximation), $\X$-envelope and preenveloping (or covariantly finite) class. 
If $\X$ is precovering and preenveloping,  it is said  that $\X$ is functorially finite.
\medskip

We recall that a pair $(\X, \Y)$ of full subcategories in $\modu\,(A)$ is a \textit{torsion pair} if $\X={}^\perp\Y$ and $\Y=\X^\perp.$
Given a torsion pair $(\X, \Y)$ in $\modu\,(A),$ it is said that $\X$ is a torsion class and $\Y$ is a torsion free class. 
Moreover, in this case,  it can be shown that $\X$ is closed under quotients and extensions, while $\Y$ is closed under submodules and extensions.
It is also well known that, for every subclass $\X\subseteq\modu\,(A)$ which is closed under quotients and extensions, there exists a subclass $\Y\subseteq\modu\,(A)$ such that $(\mathcal{X}, \mathcal{Y})$ is a torsion pair in $\modu\,(A).$
In particular, if $\mathcal{X}=\Fac(M)$ for some $M\in\modu\,(A),$ then $\mathcal{Y} = M^{\perp}.$

It is also well known that  each torsion pair $(\X,\Y)$ in $\modu\,(A)$ has associated two additive functors 
$\t,\f: \modu\,(A) \to \modu\,(A).$ The {\it torsion functor} $\t$ is a subfunctor of the identity functor $1_{\modu\,(A)},$ 
and the {\it torsion free functor} $\f$ is equal to the quotient functor $1/\t.$  
Moreover, for every $M \in \modu\,(A),$ there exists the so-called canonical short exact sequence
\begin{center}
$0\to \t(M)\to M\to \f(M)\to 0,$
\end{center}
where $\t(M)\in\mathcal{X}$ and $\f(M)\in\mathcal{Y}$ and such exact sequence is unique up to isomorphisms.

Let $\X$ be a full subcategory of $\modu\,(A).$ 
It is said that $X\in\X$ is {\it Ext-projective} in $\X$ if $\Ext^1_A(X,-)|_\X=0.$ 
Moreover, if $\X$ is functorially finite and a torsion class, it is well known that there are only finitely many (up to isomorphisms) indecomposable Ext-projective modules in $\X$ and we denote by $\mathbb{P}(\X)$ their direct sum.

\section{Stratifying systems}\label{sec:stratifying}

The concept of stratifying system for module categories was first introduced in \cite{Erdmann2003}, and further studied in \cite{Marcos2004,Marcos2005a}.
The original definition is stated in Definition\;\ref{def:eiss}, but it was shown in \cite{Marcos2004} that this definition is equivalent to the following one.

\begin{definition}\cite[Characterisation 1.6]{Marcos2004}\label{def:stratsys}
A stratifying system of size $t,$ in $\modu\,(A),$ consists of a pair $(\Theta, \leq),$ where $\Theta := \{ \Theta(i)\}_{i=1}^t$ is a family of indecomposable objects in $\modu\,(A)$ and $\leq$ is the natural order on the set $[1,t]:=\{1, 2, \dots , t\}$ satisfying the following conditions:
\begin{enumerate}
   \item[$\mathrm{(a)}$] $\Hom_A(\Theta(j), \Theta(i))=0$ if $j > i$;
   \item[$\mathrm{(b)}$] $\Ext^1_A(\Theta(i), \Theta(j))=0$ if $j \geq i$.
\end{enumerate}
\end{definition}

Let $(\Theta, \leq)$ be a stratifying system in $\modu\,(A).$ The theory of stratifiying systems, developed in \cite{Erdmann2003, Marcos2004, Marcos2005a}, shows that  $\F(\Theta)$ is equivalent to the full subcategory $\F(\Delta)$ of $\modu\,(B)$, where $B$ is a standardly stratified algebra and $\F(\Delta)$ is the category of the $\Delta$-filtered objects, of key importance in the theory of quasi-hereditary and standardly stratified algebras. In order to determine the standardly stratified algebra $B$, it is necessary to consider the so-called \textit{Ext-projective} and \textit{Ext-injective} stratifying systems. The notion of Ext-projective stratifying system was given in Definition \ref{def:epss}.

\begin{definition}\cite[Definition 1.1]{Erdmann2003}\label{def:eiss}
An Ext-injective stratifying system in $\modu(A),$ of size $t,$ is a triple 
$(\Theta,\underline{Y}, \leq ),$ where $\Theta:=\{\Theta(i)\}_{i=1}^t$ is a family of non-zero $A$-modules, $\underline{Y}:=\{Y(i)\}_{i=1}^t$ is a family of indecomposable $A$-modules and $\leq$ is the natural order on $[1,t]$ satisfying the following conditions:
\begin{enumerate}
\item[$\mathrm{(a)}$] $\mathrm{Hom}_A(\Theta(j),\Theta(i))=0$ for $j> i;$
\item[$\mathrm{(b)}$] for each $i\in [1,t]$, there is an exact sequence $0\to \Theta(i)\xrightarrow{\lambda_i} Y(i)\xrightarrow{\gamma_i}  Z(i)\to 0$ in 
$\modu\,(A)$ 
such that  $Z(i)\in\mathcal{F}(\{\Theta(j): j<i\});$
\item[$\mathrm{(c)}$] $\Ext^1_A(-,Y)|_{\F(\Theta)}=0,$ for $Y:=\bigoplus_{i=1}^t Y(i).$
\end{enumerate}
\end{definition}

Given a stratifying system $(\Theta, \leq),$ it is well known \cite{Erdmann2003, Marcos2004, Marcos2005a}  that there exist a unique (up to isomorphism) Ext-projective stratifying system $(\Theta,\underline{Q}, \leq )$ and an Ext-injective stratifying system $(\Theta,\underline{Y}, \leq ).$ 
Furthermore, for a given Ext-projective stratifying system $(\Theta,\underline{Q}, \leq ),$ we have that the pair 
$(\Theta, \leq )$ is a stratifying system. Moreover the same statement holds true for Ext-injective stratifying systems.

Let  $(\Theta,\{Q(i)\}_{i=1}^t,\leq)$ be an Ext-projective stratifying system in $\modu\,(A).$ By following \cite[Section 4]{MMS2019}, we consider the $\Theta$-Cartan matrix $C_\Theta\in\Mat_{t\times t}(\mathbb{Z}),$ naturally associated to this system, which is defined as follows: the $(i,j)$-coordinate  of $C_\Theta$ is $[C_\Theta]_{i,j}:=\dim_k\Hom_A(Q(i),\Theta(j)).$

The main idea in \cite{MMS2019} behind the introduction of the Cartan matrix $C_\Theta,$ for an Ext-projective stratifying system $(\Theta,\{Q(i)\}_{i=1}^t, \leq ),$ is that we can use it to construct a finite abelian group $G_\Theta$, known as the \emph{Cartan Group} of $\Theta$ which is defined as $G_\Theta:=\CoKer(C_\Theta),$ where 
the matrix $C_\Theta$ is identified with the $\mathbb{Z}$-linear map $\mathbb{Z}^t \to \mathbb{Z}^t,\;X\mapsto C_\Theta\,X.$ It is shown in \cite{MMS2019}, that 
$|G_\Theta| = 1$ if and only if the standardly stratified algebra $\End_A (Q)^{op}$ is quasi-hereditary. Then the size $|G_\Theta|$ of $G_\Theta$ give us a measure of how far is $\End_A(Q)^{op}$ from being quasi-hereditary.

\begin{lemma} \label{LCMTheta} \cite{Marcos2005a, MMS2019, MMSZ} Let $(\Theta,\{Q(i)\}_{i=1}^t,\leq)$ be an Ext-projective stratifying system in $\modu\,(A).
$ Then, $C_\Theta$ is an upper triangular matrix and $[C_\Theta]_{i,i}=\dim_k(\End_A(\Theta(i)))$ $\forall\,i\in[1,t].$
Moreover $|G_\Theta|= \prod_{i=1}^n [C_\Theta]_{i,i}.$
\end{lemma}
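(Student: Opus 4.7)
The plan is to establish the three claims in order: first upper-triangularity, then the identification of the diagonal, and finally the cardinality formula for $G_\Theta$. The whole argument is driven by applying the functor $\Hom_A(-,\Theta(j))$ to the defining short exact sequence $0 \to U(i) \to Q(i) \to \Theta(i) \to 0$ from Definition~\ref{def:epss}\,(b), combined with the $\Hom$-vanishing in condition (a).

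For upper-triangularity, fix $i > j$ and apply $\Hom_A(-,\Theta(j))$ to the sequence above to obtain the left exact sequence
$$0 \to \Hom_A(\Theta(i),\Theta(j)) \to \Hom_A(Q(i),\Theta(j)) \to \Hom_A(U(i),\Theta(j)).$$
The leftmost term vanishes by condition (a) since $i > j$. For the rightmost term, I would argue by induction on the length of a $\Theta$-filtration of $U(i)$: every filtration factor is some $\Theta(k)$ with $k > i$, hence in particular $k > j$, so $\Hom_A(\Theta(k),\Theta(j))=0$ by (a). Feeding each step of the filtration into the long exact sequence of $\Hom_A(-,\Theta(j))$ propagates this vanishing, yielding $\Hom_A(U(i),\Theta(j))=0$, and therefore $[C_\Theta]_{i,j}=0$.

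For the diagonal, take $j = i$ in the same exact sequence. The term $\Hom_A(U(i),\Theta(i))$ again vanishes by the same filtration argument, since every composition factor $\Theta(k)$ of $U(i)$ satisfies $k > i$ and hence $\Hom_A(\Theta(k),\Theta(i))=0$. The surviving piece is the isomorphism $\Hom_A(Q(i),\Theta(i)) \cong \Hom_A(\Theta(i),\Theta(i)) = \End_A(\Theta(i))$, giving $[C_\Theta]_{i,i}=\dim_k \End_A(\Theta(i))$.

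For the cardinality of $G_\Theta = \CoKer(C_\Theta)$, I would use the standard fact that for any square integer matrix $M$ with $\det(M)\neq 0$, the cokernel of the induced map $\mathbb{Z}^t\to\mathbb{Z}^t$ is finite of order $|\det(M)|$ (immediate from the Smith normal form). Here $C_\Theta$ is upper triangular with diagonal entries $d_i = \dim_k \End_A(\Theta(i)) \geq 1$ (as $\Theta(i)\neq 0$), so $\det(C_\Theta) = \prod_{i=1}^t d_i > 0$, and therefore $|G_\Theta| = \prod_{i=1}^t [C_\Theta]_{i,i}$. I do not anticipate any genuine obstacle; the only careful bit is spelling out the induction on the $\Theta$-filtration of $U(i)$, which is entirely mechanical once one observes that $\Hom_A(-,\Theta(j))$ converts each short exact sequence coming from the filtration into a left exact sequence whose outer terms vanish by condition (a).
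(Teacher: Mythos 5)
Your proof is correct, and it is essentially the standard argument: the paper itself gives no details here, delegating the first two claims to \cite[Lemma 2.6(b)]{Marcos2005a} and \cite[Lemma 2.5(b)]{MMSZ}, whose proofs run exactly along your lines (apply $\Hom_A(-,\Theta(j))$ to $\eta_i$ and kill the outer terms via condition (a) and the $\Theta$-filtration of $U(i)$), while the cokernel count is the Smith-normal-form fact you invoke. The only cosmetic point is that the upper bound of the product in the statement should read $t$ rather than $n$, as you implicitly corrected.
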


\begin{proof} It  follows from \cite[Lemma 2.5 (b)]{MMSZ} and \cite[Lemma 2.6 (b)]{Marcos2005a}.
\end{proof}

Let $(\Theta,\{Q(i)\}_{i=1}^t,\leq)$ be an Ext-projective stratifying system  in $\modu\,(A).$ In what follows, for each $i\in[1,t],$ we consider 
the exact sequence 
$\eta_i:\; 0 \rightarrow U(i) \xrightarrow{\alpha_i} Q(i) \xrightarrow{\beta_i} \Theta(i) \rightarrow 0$ associated to each 
$\Theta(i),$ see Definition \ref{def:epss} (b), and the map 
$$\partial_{i,j}:\Hom_A(U(i),\Theta(j))\to \Ext^1_A(\Theta(i),\Theta(j)),\;f\mapsto f\cdot \eta_i,$$ where $f\cdot \eta_i$ is the push-out of 
$f$ and $\eta_i.$

\begin{proposition}\label{PEDiagCtheta} For an Ext-projective stratifying system $(\Theta,\{Q(i)\}_{i=1}^t,\leq)$ in $\modu\,(A),$ 
the following statements are equivalent.
\begin{itemize}
\item[$\mathrm{(a)}$] The matrix $C_\Theta$ is diagonal.
\item[$\mathrm{(b)}$] $\Hom_A(Q(i), Q(j))=0$ for  $i < j.$
\item[$\mathrm{(c)}$] $\Hom_A(Q(i), \Theta(j))=0$ for  $i < j.$
\item[$\mathrm{(d)}$] For $i<j,$ we have that $\Hom_A(\Theta(i),\Theta(j))=0$ and 
$$\partial_{i,j}:\Hom_A(U(i),\Theta(j))\to \Ext^1_A(\Theta(i),\Theta(j)),\;f\mapsto f\cdot \eta_i$$ is an isomorphism.
\end{itemize}
Moreover, if one of the above conditions holds true, then $\Hom_A(Q(i),U(j))=0$ for $i<j.$
\end{proposition}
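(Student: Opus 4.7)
The plan is to establish the chain of equivalences by the route (a) $\Leftrightarrow$ (c), (c) $\Leftrightarrow$ (d) and (b) $\Leftrightarrow$ (c), obtaining the ``moreover'' statement as a by-product of the last of these.

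First, (a) $\Leftrightarrow$ (c) is immediate from Lemma \ref{LCMTheta}: since $C_\Theta$ is already upper triangular with nonzero diagonal entries $\dim_k\End_A(\Theta(i))$, being diagonal is equivalent to the vanishing of $[C_\Theta]_{i,j}=\dim_k\Hom_A(Q(i),\Theta(j))$ for all $i<j$, which is condition (c).

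For (c) $\Leftrightarrow$ (d), I would apply the functor $\Hom_A(-,\Theta(j))$ to the sequence $\eta_i\colon 0\to U(i)\to Q(i)\to\Theta(i)\to 0$ of Definition \ref{def:epss}(b). Using $\Ext^1_A(Q(i),\Theta(j))=0$ from Definition \ref{def:epss}(c), this produces the exact sequence
$$0\to\Hom_A(\Theta(i),\Theta(j))\to\Hom_A(Q(i),\Theta(j))\to\Hom_A(U(i),\Theta(j))\xrightarrow{\partial_{i,j}}\Ext^1_A(\Theta(i),\Theta(j))\to 0,$$
in which the connecting map is exactly the $\partial_{i,j}$ from the statement. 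Exactness then shows that $\Hom_A(Q(i),\Theta(j))=0$ is equivalent to the simultaneous vanishing of its submodule $\Hom_A(\Theta(i),\Theta(j))$ and injectivity of $\partial_{i,j}$; combined with the automatic surjectivity of $\partial_{i,j}$, this is condition (d).

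For (b) $\Leftrightarrow$ (c) I would instead apply $\Hom_A(Q(i),-)$ to $\eta_j$. Invoking Definition \ref{def:epss}(c) once more, the fact that $U(j)$ is $\Theta$-filtered forces $\Ext^1_A(Q(i),U(j))=0$, producing
$$0\to\Hom_A(Q(i),U(j))\to\Hom_A(Q(i),Q(j))\to\Hom_A(Q(i),\Theta(j))\to 0.$$
The implication (b) $\Rightarrow$ (c) is then immediate, while for (c) $\Rightarrow$ (b) together with the ``moreover'' clause it suffices to show $\Hom_A(Q(i),U(j))=0$ whenever $i<j$. I would prove this by induction on the length of a filtration $0=V_0\subseteq V_1\subseteq\cdots\subseteq V_n=U(j)$ with $V_\ell/V_{\ell-1}\simeq\Theta(k_\ell)$ and $k_\ell>j$; the inductive step uses the short exact sequence $0\to V_{\ell-1}\to V_\ell\to\Theta(k_\ell)\to 0$ and reduces to $\Hom_A(Q(i),\Theta(k_\ell))=0$, which holds by (c) because $i<j<k_\ell$. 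The individual arguments are routine diagram chases; the delicate point, and the main thing to keep track of, is that the strict bound $k_\ell>j>i$ (rather than merely $k_\ell\neq i$) is what makes (c) directly applicable inside the induction.
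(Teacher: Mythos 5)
Your argument is correct, and the (b) $\Leftrightarrow$ (c) step takes a genuinely different and somewhat cleaner route than the paper's. For (b) $\Rightarrow$ (c), the paper lifts a map $f\colon Q(i)\to\Theta(j)$ along $\beta_j\colon Q(j)\to\Theta(j)$ using Ext-projectivity of $Q(i)$ in $\F(\Theta)$, while you read it off directly from the short exact sequence $0\to\Hom_A(Q(i),U(j))\to\Hom_A(Q(i),Q(j))\to\Hom_A(Q(i),\Theta(j))\to 0$. More substantively, for (c) $\Rightarrow$ (b) the paper performs a reverse induction on $j$ that invokes the existence of relative projective covers in $\F(\Theta)$ (via \cite[Proposition 2.10]{Marcos2005a}, an exact sequence $0\to N\to Q_0(U(j))\to U(j)\to 0$ with $Q_0(U(j))\in\add\bigl(\bigoplus_{k>j}Q(k)\bigr)$), whereas you induct directly on the length of a $\Theta$-filtration of $U(j)$, using only the strict inequality $k_\ell>j>i$ together with hypothesis (c). This avoids the external reference and is more self-contained; it also makes the ``moreover'' clause $\Hom_A(Q(i),U(j))=0$ a lemma established en route rather than a final consequence. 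The remaining steps, (a) $\Leftrightarrow$ (c) via Lemma \ref{LCMTheta} and (c) $\Leftrightarrow$ (d) by applying $\Hom_A(-,\Theta(j))$ to $\eta_i$, coincide with the paper's.
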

\begin{proof} (a) $\Leftrightarrow$ (c): It follows from Lemma \ref{LCMTheta}.
\

(b) $\Rightarrow$ (c): Let $\Hom_A(Q(i), Q(j))=0$  for  $i < j.$  We prove now that $\Hom_A(Q(i), \Theta(j))=0.$
If $Q(j)=\Theta(j)$ there is nothing to prove.
Otherwise, take $f: Q(i) \to \Theta(j)$ and consider the exact sequence 
$$\eta_j:\quad 0 \rightarrow U(j) \xrightarrow{\alpha_j} Q(j) \xrightarrow{\beta_j} \Theta(j) \rightarrow 0$$ associated to $\Theta(j),$ 
where $U(j)$ is nonzero and $U(j) \in \F(\{\Theta(k) : k>j\}).$
Since $Q(i)$ is Ext-projective in $\F(\Theta)$,  there exist $f': Q(i) \to Q(j)$ such that $f'\beta_j = f$.
By hypothesis, $f'=0$. Hence $f=0$.
\

(c) $\Rightarrow$ (b): Let $\Hom_A(Q(i), \Theta(j))=0$ for $i<j.$ We carry on the proof by reverse induction on $j.$  
If $j=n$ then $\Theta(n)=Q(n)$ and thus (b) follows. Therefore, we can assume that $j<n.$ Let us prove that $\Hom_A(Q(i), Q(j))=0.$ In order to do that, take a map $g : Q(i) \to Q(j)$ and consider the exact sequence $\eta_j.$
Since $g \beta_j =0$, there exist $h: Q(i) \to U(j)$ such that $\alpha_j h = g.$
Moreover,  since $U(j) \in \F(\{\Theta(k) : k>j\})$, \cite[Proposition 2.10]{Marcos2005a} implies the existence of an exact sequence  
$$ 0 \rightarrow N \to Q_0(U(j)) \xrightarrow{\epsilon_{U(j)}} U(j) \rightarrow 0 $$
in $\F(\Theta),$ where $Q_0(U(j)) \in \add\left( \bigoplus_{k>j} Q(k) \right)$.
Once again, by using that $Q(i)$ is Ext-projective in $\F(\Theta),$  there exist $h': Q(i) \to Q_0(U(j))$ such that $\epsilon_{U(j)}h' = h$.
By induction, $h'=0$.
Therefore $g=\alpha_j\epsilon_{U(j)}h'=0$.
\

(c) $\Leftrightarrow$ (d): Let $i<j.$ By applying the functor $\Hom_A(-,\Theta(j))$ to the exact sequence $\eta_i,$ we get the following exact sequence
$$0\to \Hom_A(\Theta(i),\Theta(j))\to \Hom_A(Q(i),\Theta(j))\to \Hom_A(U(i),\Theta(j))\to \Ext^1_A(\Theta(i),\Theta(j))\to 0$$
Thus, the equivalence between (c) and (d) follows from the above exact sequence.
\

Finally, assume that (b) holds true. Then, by applying the functor $\Hom_A(Q(i),-)$ to $\eta_j$ we get that $\Hom_A(Q(i),U(j))=0$ for $i<j.$

\end{proof}

\begin{corollary}\label{CEDiagCtheta}   For an Ext-projective stratifying system $(\Theta,\{Q(i)\}_{i=1}^t,\leq)$ in $\modu\,(A),$ such that 
the matrix $C_\Theta$ is diagonal, the following statements are equivalent.
\begin{itemize}
\item[$\mathrm{(a)}$] $[U(i):\Theta(j)]=0$ for $i<j.$
\item[$\mathrm{(b)}$] $\Ext^1_A(\Theta(i),\Theta(j))=0$ $\forall\,i,j.$
\item[$\mathrm{(c)}$] $Q(i)\simeq \Theta(i)$ as $A$-modules $\forall\,i.$
\item[$\mathrm{(d)}$] $U(i)=0$ $\forall\,i.$
\end{itemize}
Moreover, if one of the above conditions holds true, then $\F(\Theta)=\add(Q)$ and $\End_A(Q)\simeq \times_{i=1}^t\,\End_A(Q(i))$ as $k$-algebras. 
\end{corollary}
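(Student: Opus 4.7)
The plan is to run the loop of implications $\mathrm{(a)} \Leftrightarrow \mathrm{(d)} \Leftrightarrow \mathrm{(c)} \Rightarrow \mathrm{(b)} \Rightarrow \mathrm{(d)}$, and then deduce the ``moreover'' part. All four conditions already refer to the defining exact sequences $\eta_i\colon 0\to U(i)\to Q(i)\to \Theta(i)\to 0$, so the whole proof should be extracted from those sequences combined with the Ext-projectivity of $Q=\bigoplus_i Q(i)$ in $\F(\Theta)$ and the indecomposability of each $Q(i)$.

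First I would observe that by Definition~\ref{def:epss}~(b) we have $U(i)\in\F(\{\Theta(k):k>i\})$, so in any $\Theta$-filtration of $U(i)$ only factors $\Theta(k)$ with $k>i$ can occur. Hence the hypothesis $[U(i):\Theta(j)]=0$ for $i<j$ forces every such multiplicity to vanish, i.e.\ $U(i)=0$; conversely $U(i)=0$ trivially gives (a). This establishes (a)$\Leftrightarrow$(d). Feeding $U(i)=0$ into $\eta_i$ immediately gives $Q(i)\simeq\Theta(i)$, and for (c)$\Rightarrow$(d) I would note that any surjection from an indecomposable finite length module $Q(i)$ onto an isomorphic module $\Theta(i)$ is an isomorphism (Fitting), so $U(i)=\ker\beta_i=0$. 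This gives (c)$\Leftrightarrow$(d).

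For (c)$\Rightarrow$(b), I would simply use that $Q$ is Ext-projective in $\F(\Theta)$: since $\Theta(j)\in\F(\Theta)$ and $\Theta(i)\simeq Q(i)$, we get $\Ext^1_A(\Theta(i),\Theta(j))\simeq\Ext^1_A(Q(i),\Theta(j))=0$ for all $i,j$. For the key step (b)$\Rightarrow$(d), apply $\Hom_A(\Theta(i),-)$ to the $\Theta$-filtration of $U(i)$: since each subquotient is some $\Theta(k)$ and each $\Ext^1_A(\Theta(i),\Theta(k))=0$ by hypothesis, dimension shifting yields $\Ext^1_A(\Theta(i),U(i))=0$. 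Therefore $\eta_i$ splits, giving $Q(i)\simeq\Theta(i)\oplus U(i)$, and the indecomposability of $Q(i)$ together with $\Theta(i)\neq 0$ forces $U(i)=0$. The only mildly delicate step is this last one, where one has to use filtration-by-$\Theta$ of $U(i)$ to reduce Ext vanishing with $U(i)$ to the hypothesis on pairs of $\Theta$'s.

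For the ``moreover'' part, assume one (hence all) of (a)--(d) holds. Any $X\in\F(\Theta)$ admits a filtration with subquotients $\Theta(k)$; since $\Ext^1_A(\Theta(i),\Theta(k))=0$ for all $i,k$ by (b), the filtration splits successively, so $X$ is a direct sum of copies of the $\Theta(i)=Q(i)$. This proves $\F(\Theta)=\add(Q)$. For the endomorphism algebra, decompose $\End_A(Q)=\bigoplus_{i,j}\Hom_A(Q(i),Q(j))$: Proposition~\ref{PEDiagCtheta}\,(b) kills the summands with $i<j$, while for $i>j$ we have $\Hom_A(Q(i),Q(j))\simeq \Hom_A(\Theta(i),\Theta(j))=0$ by axiom~(a) of Definition~\ref{def:stratsys}. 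Only the diagonal summands $\End_A(Q(i))$ survive, so $\End_A(Q)\simeq \prod_{i=1}^t\End_A(Q(i))$ as $k$-algebras, concluding the proof.
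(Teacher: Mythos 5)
Your proof is correct, and it takes a slightly different route from the paper's, which I think is worth noting. The paper runs the cycle (a)$\Rightarrow$(b)$\Rightarrow$(c)$\Rightarrow$(d)$\Rightarrow$(a): for (a)$\Rightarrow$(b) it invokes Proposition~\ref{PEDiagCtheta}\,(d), namely that the diagonality of $C_\Theta$ makes $\partial_{i,j}\colon\Hom_A(U(i),\Theta(j))\to\Ext^1_A(\Theta(i),\Theta(j))$ an isomorphism for $i<j$, and then deduces $\Ext^1$ vanishing from $\Hom_A(U(i),\Theta(j))=0$. You instead observe directly that (a) and (d) are the same statement in disguise (since $U(i)\in\F(\{\Theta(k):k>i\})$, the vanishing of all multiplicities $[U(i):\Theta(j)]$ for $j>i$ already forces $U(i)=0$), and then close the loop with (d)$\Leftrightarrow$(c) by length/dimension count and (c)$\Rightarrow$(b)$\Rightarrow$(d) by Ext-projectivity and a filtration-splitting argument. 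Your route has the advantage of making the diagonality hypothesis on $C_\Theta$ visibly unnecessary for the equivalence of (a)--(d); it only enters (as it should) in the ``moreover'' part via Proposition~\ref{PEDiagCtheta}\,(b) to kill $\Hom_A(Q(i),Q(j))$ for $i<j$. You also spell out the claim $\F(\Theta)=\add(Q)$, which the paper's proof leaves implicit, by iterating the splitting of $\Theta$-filtrations along condition~(b). Both proofs are correct; yours is a bit more economical in its use of the hypotheses.
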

\begin{proof} By Proposition \ref{PEDiagCtheta}, we get that $\Hom_A(\Theta(i),\Theta(j))=0$ $\forall\,i\neq j,$ and moreover 
$\partial_{i,j}:\Hom_A(U(i),\Theta(j))\xrightarrow{\simeq} \Ext^1_A(\Theta(i),\Theta(j))$ for $i<j.$
\

(a) $\Rightarrow$ (b): Let $i,j\in [1,t].$ By \cite[Lemma 2.6 (b)]{Marcos2005a}, we know that $\Ext^1_A(\Theta(i),\Theta(j))=0$ for $i\geq j.$ Hence, we can assume that $i<j.$ Since $U(i)\in \F(\Theta(l)\;:\; l>i),$ by (a) we have that $\Hom_A(U(i),\Theta(j))=0.$ Therefore 
$\Ext^1_A(\Theta(i),\Theta(j))=0$ for $i< j.$
\

(b) $\Rightarrow$ (c):  Let $i\in[1,t].$ Then, by (b), we get that the exact sequence $\eta_i:\quad 0 \rightarrow U(i) \xrightarrow{\alpha_i} Q(i) \xrightarrow{\beta_i} \Theta(i) \rightarrow 0$ splits. Thus $Q(i)\simeq \Theta(i)$ as $A$-modules since $Q(i)$ and $\Theta(i)$ are indecomposable (see \cite[Corollary 2.13]{Marcos2005a}).
\

(c) $\Rightarrow$ (d): From (c) and the exact sequence $\eta_i,$ we have that $\dim_k U(i)=0$ and hence $U(i)=0.$
\

(d) $\Rightarrow$ (a): It is trivial.

Finally,  If one of the above conditions holds true, then we get that 
$Q(i)\simeq \Theta(i)$ as $A$-modules $\forall\,i.$ Hence $\Hom_A(Q(i),Q(j))=0$ for $i\neq j$ and thus $\End_A(Q)\simeq \times_{i=1}^t\,\End_A(Q(i))$ as $k$-algebras. 
\end{proof}

\section{$\tau$-tilting theory}\label{sec:tau-tilting}

Most of the results of this paper are proven using the tools provided by $\tau$-tilting theory \cite{AIR}.
We now give a brief summary of the definitions and results on $\tau$-tilting theory that will be needed later. 
For a broader survey on this topic, the reader is encouraged to see \cite{TreffingerSurvey}.

\begin{definition}\cite[Definition 0.1]{AIR}
Let  $M\in\modu\,(A).$  If $\Hom_A(M, \tau M)=0,$ it is said that $M$ is \textit{$\tau$-rigid}. A $\tau$-rigid module $M$ is \textit{$\tau$-tilting} if $rk(M)=rk({}_AA).$ A pair $(M,P)$ is called $\tau$-rigid if $P\in\modu\,(A)$ is projective, $M$ is $\tau$-rigid and $\Hom_A(P,M)=0.$
Finally, the $\tau$-rigid pair $(M,P)$ is $\tau$-tilting if $rk(M)+rk(P)=rk({}_AA).$
\end{definition}

One of the main features of $\tau$-tilting theory is that all functorially finite torsion classes in $\modu\,(A)$ can be described by using $\tau$-tilting pairs, as stated in the following result that has been taken from \cite[Theorem 2.7]{AIR} and \cite[Theorem 5.10]{Auslander1981}.

\begin{theorem}\label{thm:fftors}
For any algebra $A,$ there is a well defined function 
\begin{center} $\Phi: \mathrm{\tau\text{-}rig}(A) \to \mathrm{f\text{-}tors}(A),\;M\mapsto \Fac(M),$
\end{center} from $\tau$-rigid basic pairs to functorially finite torsion classes in $\modu\,(A).$ 
Moreover, $\Phi$ is a bijection if we restrict it to the class $\mathrm{s\tau\text{-}tilt}(A)$ of $\tau$-tilting pairs, and in this case $\Phi^{-1}(\X)=\mathbb{P}(\X).$
\end{theorem}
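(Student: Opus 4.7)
The plan is to proceed in three stages: well-definedness of $\Phi$, then surjectivity with candidate inverse $\X\mapsto \mathbb{P}(\X)$, and finally the rank equality that yields both injectivity and mutual inverseness.

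For the first stage, given a $\tau$-rigid pair $(M,P)$, the class $\Fac(M)$ is manifestly closed under quotients and finite direct sums and contains $M$, so it remains to verify closure under extensions. The crucial input is the Auslander--Reiten formula
$$D\Ext^1_A(M,Y)\cong \overline{\Hom}_A(Y,\tau M),$$
which, combined with the fact that every $Y\in\Fac(M)$ admits a surjection $M^n\twoheadrightarrow Y$ and the hypothesis $\Hom_A(M,\tau M)=0$, gives $\Ext^1_A(M,Y)=0$ for all $Y\in\Fac(M)$. A short diagram chase upgrades this to closure under extensions, and hence to the torsion class property. Functorial finiteness is standard: generators of $\Hom_A(M,N)$ as an $\End_A(M)$-module yield a right $\Fac(M)$-approximation of $N$, and left approximations follow dually.

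For the second stage, given a functorially finite torsion class $\X$, set $Q:=\mathbb{P}(\X)$. A Wakamatsu-type argument applied to minimal right $\add(Q)$-approximations shows that every $X\in\X$ is a quotient of some $Q^n$, so $\X=\Fac(Q)$. The Ext-projectivity $\Ext^1_A(Q,\X)=0$, combined again with the Auslander--Reiten formula, yields $\Hom_A(Q,\tau Q)=0$, making $Q$ a $\tau$-rigid module. The accompanying projective part $P_\X$ is the maximal projective summand of ${}_AA$ lying in $\X^\perp$, and $\Hom_A(P_\X,Q)=0$ is built into the construction.

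The main obstacle, and the content of the third stage, is the rank equality $rk(Q)+rk(P_\X)=rk({}_AA)$ needed to promote $(Q,P_\X)$ to a genuine $\tau$-tilting pair. In \cite{AIR} this is achieved by realising $\tau$-rigid pairs as two-term presilting complexes in the bounded homotopy category $K^b(\proj A)$ and invoking the silting completion theorem; in \cite{Auslander1981} it follows from a careful analysis of the minimal right $\add(Q)$-approximation of ${}_AA$. Once this rank count is in hand, mutual inverseness is automatic: for a $\tau$-tilting pair $(M,P)$, the first stage shows that $M$ is Ext-projective in $\Fac(M)$, so $M$ is a summand of $\mathbb{P}(\Fac(M))$, and the rank equality forces $M\simeq \mathbb{P}(\Fac(M))$ together with $P\simeq P_{\Fac(M)}$.
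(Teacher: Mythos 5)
The paper does not prove Theorem~\ref{thm:fftors}; it is quoted verbatim from \cite[Theorem 2.7]{AIR} together with \cite[Theorem 5.10]{Auslander1981}, so there is no in-paper argument against which to compare. Your three-stage plan does track the structure of the cited proofs, but stage~2 has a directional error that would cause the argument to fail as written, and stage~1 is too quick at one point.

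In stage~2 you propose to show $\X\subseteq\Fac(Q)$ by applying a Wakamatsu-type argument to \emph{minimal right $\add(Q)$-approximations} of objects $X\in\X$. That goes the wrong way: without already knowing $X\in\Fac(Q)$, there is no reason for a right $\add(Q)$-approximation $Q'\to X$ to be an epimorphism, so you cannot conclude $X$ is a quotient of a power of $Q$. The argument in the references starts instead from a minimal \emph{left} $\X$-approximation $a\colon {}_AA\to X_A$ of the regular module. Since any $X\in\X$ is a quotient of some $A^m$ and each component $A\to X$ factors through $a$ (because $X\in\X$), one gets an epimorphism $X_A^m\twoheadrightarrow X$, hence $\X\subseteq\Fac(X_A)$; the reverse inclusion is clear since $X_A\in\X$. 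One then uses the dual Wakamatsu lemma to see that $X_A$ (and $\mathrm{t}(A)$) are Ext-projective in $\X$, identifying $\mathbb{P}(\X)$ with $\add(X_A\oplus\mathrm{t}(A))$. This left-approximation-of-$A$ step is the real content of your stage~2 and should replace the right-approximation phrasing. A secondary imprecision in stage~1: for \emph{any} torsion class $\X$, contravariant finiteness is automatic (the inclusion $\mathrm{t}(N)\hookrightarrow N$ is a right $\X$-approximation), so the nontrivial half of functorial finiteness is covariant finiteness; saying left approximations ``follow dually'' hides the place where $\tau$-rigidity and the Auslander--Smal{\o} analysis actually enter.
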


In \cite{Mendoza2019}, it was shown how to produce a stratifying system from a basic $\tau$-rigid module $M\in\modu\,(A).$ We start by recalling the notion of torsion free admissible decomposition.

\begin{definition} \cite[Definition 3.1]{Mendoza2019}  Let $M\in\modu\,(A)$ be basic and $\tau$-rigid. A decomposition $M=\bigoplus_{i=1}^t M_i$ as the direct sum of indecomposables $A$-modules is called {\it torsion free admissible} (TF-admissible, for short) if $M_i \not \in \Fac\left(\bigoplus_{j>i} M_j\right),$
for every $i\in[1,t].$
\end{definition}

The existence of such TF-admissible decompositions is supported by the following proposition proved in \cite{Mendoza2019}.

\begin{proposition}\cite[Proposition 3.2]{Mendoza2019}
Every basic and $\tau$-rigid $A$-module $M\in\modu\,(A)$ admits a TF-admissible decomposition.
\end{proposition}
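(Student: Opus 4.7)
The plan is to prove the proposition by induction on the number $t$ of indecomposable summands of $M$. The base case $t=1$ is trivial: writing $M = M_1$, the TF-admissibility condition at $i=1$ reads $M_1 \notin \Fac(0) = \{0\}$, which holds as $M_1 \neq 0$. For the inductive step I reduce matters to the following \emph{existence claim}: every basic $\tau$-rigid module $M$ with at least one indecomposable summand admits an indecomposable summand $N$ such that $N \notin \Fac(M/N)$, where $M/N$ denotes the direct sum of the remaining indecomposable summands. Granted this claim, set $M_1 := N$. Then $M/M_1$ is basic and $\tau$-rigid (as a direct summand of $M$) with $t-1$ indecomposable summands, so the induction hypothesis produces a TF-admissible decomposition $M_2 \oplus \cdots \oplus M_t$ of $M/M_1$, and the concatenation $M = M_1 \oplus M_2 \oplus \cdots \oplus M_t$ is TF-admissible: the condition at index $1$ is exactly the choice of $M_1$, and the conditions at indices $\geq 2$ are inherited from $M/M_1$.

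For the existence claim I would argue by contradiction, assuming that every indecomposable summand $N_i$ of $M$ satisfies $N_i \in \Fac(M/N_i)$. Closure of $\Fac(M/N_i)$ under direct sums combined with $M = N_i \oplus (M/N_i)$ then gives $M \in \Fac(M/N_i)$, hence $\Fac(M) \subseteq \Fac(M/N_i)$; the reverse inclusion is automatic, so $\Fac(M) = \Fac(M/N_i)$ for every $i$. For each $i$ I would choose a minimal right $\add(M/N_i)$-approximation $\alpha_i \colon Q_i \to N_i$, which is necessarily surjective since $N_i \in \Fac(M/N_i)$; this yields a short exact sequence
$$ 0 \to K_i \to Q_i \xrightarrow{\alpha_i} N_i \to 0. $$
Since $M$ is $\tau$-rigid, each $N_i$ is Ext-projective in $\Fac(M)$, so $\Ext^1_A(N_i, X) = 0$ for every $X \in \Fac(M) = \Fac(M/N_i)$. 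The plan is to show that $K_i \in \Fac(M)$: once this is established, the sequence above splits, exhibiting $N_i$ as a direct summand of $Q_i \in \add(M/N_i)$, which by Krull--Schmidt forces $N_i \simeq N_j$ for some $j \neq i$, contradicting the fact that $M$ is basic.

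The principal obstacle is the verification that $K_i \in \Fac(M)$. One natural route is via a Wakamatsu-type argument: the minimality of $\alpha_i$ places $K_i$ in an $\Ext^1$-perpendicular region that is controlled by the $\tau$-rigidity of $M/N_i$, and one seeks to identify this region with $\Fac(M)$ using the equality $\Fac(M) = \Fac(M/N_i)$ derived above. An alternative formulation is to introduce the preorder $N_i \preceq N_j \iff N_i \in \Fac(N_j)$ on the finite set of indecomposable summands, to establish antisymmetry from Ext-projectivity in $\Fac(M)$ together with Krull--Schmidt applied inside the basic $\tau$-rigid module $M$, and to select $N$ as a maximal element of the resulting partial order. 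In either formulation the decisive ingredient is $\tau$-rigidity: it is what furnishes the Ext-projectivity of each $N_i$ in $\Fac(M)$, and this is the input that rules out the pathological scenario in which every indecomposable summand could be recovered as a quotient of the others.
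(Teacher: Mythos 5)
The paper states this proposition with a citation to \cite{Mendoza2019} and gives no proof of its own, so I can only assess your argument on its merits rather than compare it line by line with the source.

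Your reduction is fine: induction on the number of indecomposable summands works once one knows that a basic $\tau$-rigid $M=\bigoplus_{i=1}^t M_i$ with $t\ge 1$ always has a summand $N$ with $N\notin\Fac(M/N)$. The problem is that the route you propose to that existence claim is not merely incomplete --- the crucial step is provably false, so no amount of Wakamatsu-style cleverness will close the gap. Concretely, assume for contradiction that $N_i\in\Fac(M/N_i)$ for every $i$, form the short exact sequence $0\to K_i\to Q_i\xrightarrow{\alpha_i}N_i\to 0$ with $Q_i\in\add(M/N_i)$, and apply $\Hom_A(N_i,-)$. Since $N_i$ is Ext-projective in $\Fac(M)$ and $Q_i\in\Fac(M)$, one has $\Ext^1_A(N_i,Q_i)=0$, so the connecting map $\Hom_A(N_i,N_i)\to\Ext^1_A(N_i,K_i)$ is onto and kills exactly the image of $(\alpha_i)_*$. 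Because $M$ is basic and $Q_i\in\add(M/N_i)$, Krull--Schmidt rules out $N_i$ being a direct summand of $Q_i$, so $\mathrm{id}_{N_i}$ is \emph{not} in the image of $(\alpha_i)_*$; hence $\Ext^1_A(N_i,K_i)\neq 0$. But $\Ext^1_A(N_i,-)$ vanishes on $\Fac(M)$, so this forces $K_i\notin\Fac(M)$. In other words, under your contradiction hypothesis the statement ``$K_i\in\Fac(M)$'' is not just hard to verify --- it is \emph{equivalent} to the splitting you are trying to deduce, and both fail. The argument is therefore circular and cannot produce the desired contradiction.

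Your second suggested route --- the preorder $N_i\preceq N_j\iff N_i\in\Fac(N_j)$ --- does not isolate the right condition either: TF-admissibility at position $1$ asks that $N\notin\Fac\bigl(\bigoplus_{j\ne i}N_j\bigr)$, and it can easily happen that $N_i\notin\Fac(N_j)$ for each $j$ individually while $N_i\in\Fac\bigl(\bigoplus_{j\ne i}N_j\bigr)$, so a $\preceq$-maximal summand is not automatically the one you want. You will need a genuinely different mechanism to locate the first summand of a TF-admissible ordering; the $\tau$-rigidity has to enter in a way that controls $\Fac$ of \emph{direct sums} of summands, not just pairwise approximation sequences, and the minimal-approximation computation above shows that the kernel of such an approximation lands outside $\Fac(M)$ rather than inside it.
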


The following result, proved in \cite{Mendoza2019}, tells us how to build a stratifying system $(\Delta_M,\leq)$ from a TF-admissible decomposition of $M\in\modu\,(A).$ The stratifying system $(\Delta_M,\leq)$ is known as the {\it $M$-standard stratifying system} associated to this TF-admissible decomposition.

\begin{theorem}\cite[Theorem 3.4]{Mendoza2019}
Let $M\in\modu\,(A)$ be a basic and $\tau$-rigid  with a TF-admissible decomposition 
$M= \bigoplus_{i=1}^t M_i,$ and let  $\f_k$ be the torsion free functor associated to the torsion pair 
$\left( \Fac(\bigoplus_{j \geq k}M_j), (\bigoplus_{j \geq k}M_j)^\perp  \right).$
Then, the family $\Delta_M := \{ \Delta_M(i) := \f_{i+1}(M_i) \}_{i=1}^t$ and the natural order on $[1,t]$ form  a stratifying 
system in $\modu\,(A)$ of size $t$. 
\end{theorem}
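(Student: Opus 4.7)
The plan is to verify the three defining axioms of Definition~\ref{def:stratsys} for the pair $(\Delta_M, \leq)$: each $\Delta_M(i)$ is indecomposable and non-zero, $\Hom_A(\Delta_M(j), \Delta_M(i))=0$ for $j>i$, and $\Ext^1_A(\Delta_M(i), \Delta_M(j))=0$ for $j \geq i$.

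First I would fix notation. Let $N_k := \bigoplus_{l \geq k} M_l$, $\T_k := \Fac(N_k)$, and $\F_k := N_k^\perp$. Since each $N_k$ is $\tau$-rigid as a summand of $M$, Theorem~\ref{thm:fftors} gives that $(\T_k, \F_k)$ is a functorially finite torsion pair in $\modu\,(A)$, and the chain $\T_1 \supseteq \T_2 \supseteq \cdots \supseteq \T_{t+1}=0$ yields the opposite chain $\F_1 \subseteq \F_2 \subseteq \cdots$. The canonical sequence $0 \to \t_{i+1}(M_i) \to M_i \to \Delta_M(i) \to 0$ immediately shows $\Delta_M(i) \neq 0$: otherwise $M_i$ would lie in $\T_{i+1}=\Fac(\bigoplus_{j>i} M_j)$, contradicting TF-admissibility. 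For indecomposability, since $M_i$ is an indecomposable summand of the $\tau$-rigid module $M$, it is Ext-projective in $\Fac(M)$; I would combine this rigidity with the Krull--Schmidt theorem to show that any nontrivial summand decomposition of $\Delta_M(i)=\f_{i+1}(M_i)$ would pull back along the canonical sequence to a decomposition of $M_i$, a contradiction.

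The Hom-vanishing is then immediate from the torsion pair structure: for $j>i$ the module $\Delta_M(j)$ is a quotient of $M_j$ with $j \geq i+1$, so $\Delta_M(j)\in \Fac(M_j)\subseteq \T_{i+1}$, while $\Delta_M(i) \in \F_{i+1}$ by construction, whence $\Hom_A(\Delta_M(j), \Delta_M(i))=0$.

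The hardest step, and the main obstacle, is the Ext-vanishing for $j \geq i$. The plan is to apply $\Hom_A(-, \Delta_M(j))$ to the canonical sequence above. Since $M_i$ is a summand of $\tau$-rigid $M$, the Auslander--Reiten formula yields $\Ext^1_A(M_i, \Fac(M))=0$; as $\Delta_M(j)\in \Fac(M_j) \subseteq \Fac(M)$, this gives $\Ext^1_A(M_i, \Delta_M(j))=0$, and so $\Ext^1_A(\Delta_M(i), \Delta_M(j))$ is identified with the cokernel of the restriction map $\Hom_A(M_i, \Delta_M(j))\to \Hom_A(\t_{i+1}(M_i), \Delta_M(j))$. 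For $j=i$ the target vanishes directly by the torsion pair orthogonality of $\T_{i+1}$ and $\F_{i+1}$. For $j>i$ I would proceed by reverse induction on $i$, establishing in parallel with the axioms that $\t_{i+1}(M_i)\in \T_{i+1}$ admits a filtration whose factors lie in $\{\Delta_M(l):l>i\}$; feeding this filtration into the long exact sequence and using the already-proved Hom-vanishing together with the inductive Ext-vanishing for larger indices reduces the surjectivity question to pieces in which both Hom-terms vanish. Establishing this filtration of $\t_{i+1}(M_i)$ by the higher $\Delta_M(l)$'s -- essentially the assertion that $\T_{i+1}\cap \F(\Delta_M) = \F(\{\Delta_M(l):l>i\})$ -- is the technical heart of the argument and the step I expect to require the most care.
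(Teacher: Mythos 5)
The paper you are reading does not actually prove this theorem; it is quoted from \cite{Mendoza2019}, so I am comparing your plan against the proof given there. Your overall structure (verifying the axioms of Definition~\ref{def:stratsys}) and your handling of the first two axioms are fine: non-vanishing of $\Delta_M(i)$ is exactly the TF-admissibility argument, and the Hom-vanishing for $j>i$ is the torsion-pair orthogonality you describe. One small repair: for indecomposability, a direct sum decomposition of the quotient $\Delta_M(i)$ does not ``pull back'' to one of $M_i$; the correct mechanism is that every endomorphism of $M_i$ preserves $\t_{i+1}(M_i)$ and hence descends, while every endomorphism of $\Delta_M(i)$ lifts to $M_i$ because $\Ext^1_A(M_i,\t_{i+1}(M_i))=0$, giving a surjection $\End_A(M_i)\to\End_A(\Delta_M(i))$ of rings; the target is then local.

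The genuine gap is in the Ext-vanishing for $j>i$. The step you call the technical heart --- that $\t_{i+1}(M_i)$ admits a filtration with factors in $\{\Delta_M(l):l>i\}$ --- is false in general. In the notation of Section~\ref{sec:mainresults} one has $\t_{i+1}(M_i)=K(i)$, and Proposition~\ref{MinFD} shows that $K(i)\in\F(\Delta_M)$ would force $K(i)\simeq U(i)$ and $Q(i)\simeq M_i$, i.e.\ $M_i\in\F(\Delta_M)$ --- a condition the paper treats throughout as a nontrivial extra hypothesis. The second example of Section~\ref{Examples} is an explicit counterexample: there $K(1)=\rep{3 \;\, 2\\ \;1\;}$ has dimension vector $(1,1,1)$ while $U(1)=0$, and the only admissible factors $\Delta_A(2)$, $\Delta_A(3)$ have dimension vectors $(1,2,0)$ and $(1,0,2)$, so no such filtration can exist (consistently, $R^A\neq 0$ there). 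Since your reverse induction feeds this filtration into the long exact sequence, the argument collapses at exactly the point you flagged as needing the most care. The proof in \cite{Mendoza2019} goes another way: one shows that $\Delta_M(i)=\f_{i+1}(M_i)$ is Ext-projective in the larger torsion class $\Fac\bigl(\bigoplus_{l\geq i}M_l\bigr)$, which contains every $\Delta_M(j)$ with $j\geq i$; this is obtained from $\tau$-tilting reduction (Jasso) or, equivalently, from the Auslander--Smal{\o} criterion of Theorem~\ref{thm:fftors}'s circle of ideas by proving $\Hom_A\bigl(\bigoplus_{l\geq i}M_l,\tau\Delta_M(i)\bigr)=0$. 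Controlling $\tau\Delta_M(i)$ --- not merely $\tau M_i$ as in Lemma~\ref{lem:M-theta}~(d) --- is the real content, and your plan does not address it.
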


In this paper we study the relation between the homological properties of a  basic and $\tau$-rigid module $M\in\modu\,(A)$ and its induced $M$-stratifying system $(\Delta_M,\leq).$ In order to do that,  we need to introduce a vector with integer coordinates, known as the $g$-vector of $M$, associated to each module $M$ in $\modu\;(A).$ For defining such $g$-vector, we denote by $P(1),\ldots, P(n)$ the iso-classes of indecomposable projective $A$-modules.

\begin{definition}\cite[Section 5]{AIR}
Let $M\in\modu\,(A)$ and $P_1\to P_0\to M\to 0$ be the minimal projective presentation of $M,$ where $P_0=\bigoplus\limits_{i=1}^n P(i)^{a_i}$ and $P_1=\bigoplus\limits_{i=1}^n P(i)^{a'_i}.$
The $g$-vector of $M$ is set to be $g^M:=(a_1-a'_1, a_2-a'_2,\dots, a_n-a'_n)\in\mathbb{Z}^n.$
\end{definition}

Another useful vector associated with $M\in\modu\,(A)$ is the {\it dimension vector} $\udim(M)\in\mathbb{Z}^n$ whose $i$-th coordinate is the composition factor of the simple $S(i)$ in $M,$ where $S(i)$ is the simple top of $P(i).$ 
There is important homological information that arises when $g$-vectors and dimension vectors interplay, as shown in \cite{Auslander1985}.
For $u,v\in\mathbb{Z}^n,$ we denote by $\langle u, v\rangle$ the standard inner product $\sum_{i=1}^n u_iv_i$ of these vectors.

\begin{theorem}\cite[Theorem 1.4.(a)]{Auslander1985}\label{formula}
Let $M$ and $N$ be in $\modu\,(A).$ Then 
$$\langle g^{M},\udim(N)\rangle=\dim_k(\emph{Hom}_A(M,N))-\dim_k(\emph{Hom}_A(N,\tau_A M)).$$
\end{theorem}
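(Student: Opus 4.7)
The plan is to compute both sides of the identity by starting from the minimal projective presentation that defines $g^M$ and reducing the left-hand side to Euler-characteristic data that matches the right-hand side via a standard four-term exact sequence.

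First I would rewrite the inner product. Take the minimal projective presentation
$P_1\xrightarrow{f} P_0\to M\to 0$ with $P_0=\bigoplus_{i=1}^n P(i)^{a_i}$ and $P_1=\bigoplus_{i=1}^n P(i)^{a'_i}$. Since $\dim_k\Hom_A(P(i),N)$ equals the multiplicity $[N:S(i)]$ of the simple top $S(i)$ of $P(i)$ as a composition factor of $N$, additivity gives
\[
\langle g^M,\udim(N)\rangle=\sum_{i=1}^n(a_i-a'_i)\dim_k\Hom_A(P(i),N)=\dim_k\Hom_A(P_0,N)-\dim_k\Hom_A(P_1,N).
\]
So the theorem reduces to exhibiting an exact sequence whose alternating dimension count produces the two remaining terms $\dim_k\Hom_A(M,N)$ and $\dim_k\Hom_A(N,\tau_A M)$.

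Next I would apply $\Hom_A(-,N)$ to the presentation to obtain the left-exact sequence
\[
0\to\Hom_A(M,N)\to\Hom_A(P_0,N)\xrightarrow{f^{*}}\Hom_A(P_1,N)\to \CoKer(f^{*})\to 0.
\]
The key identification is $\CoKer(f^{*})\cong \Tr M\otimes_A N$. To see it, use the natural isomorphism $\Hom_A(P_i,N)\cong\Hom_A(P_i,A)\otimes_A N$ (valid for finitely generated projectives), and recall that the transpose $\Tr M$ is defined by the exact sequence $\Hom_A(P_0,A)\to\Hom_A(P_1,A)\to\Tr M\to 0$ in $\modu\,(A^{\mathrm{op}})$; right-exactness of $-\otimes_A N$ yields the claim.

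Finally I would convert $\Tr M\otimes_A N$ into $\Hom_A(N,\tau_A M)$ via the standard adjunction/duality identity
\[
D(\Tr M\otimes_A N)\cong\Hom_A(N,D\Tr M)=\Hom_A(N,\tau_A M),
\]
which follows from $\Hom_A(N,\Hom_k(X,k))\cong\Hom_k(X\otimes_A N,k)$ applied to $X=\Tr M$. Since $D$ preserves $k$-dimensions, taking alternating dimensions in the four-term exact sequence above yields
\[
\dim_k\Hom_A(M,N)-\dim_k\Hom_A(P_0,N)+\dim_k\Hom_A(P_1,N)-\dim_k\Hom_A(N,\tau_A M)=0,
\]
and combining with the first step gives the asserted formula. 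The only delicate point is the identification of $\CoKer(f^{*})$ with $\Tr M\otimes_A N$ and its dual with $\Hom_A(N,\tau_A M)$; everything else is bookkeeping with dimensions. I would expect the argument to be robust because none of the steps uses anything beyond the minimal projective presentation, the definition of $\Tr$, and the tensor-hom adjunction with $D$.
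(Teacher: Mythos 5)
Your argument is correct, and it is in fact the standard proof of this formula as it appears in the cited source (Auslander--Reiten, ``Modules determined by their composition factors''), which the paper quotes without proof. The chain of reductions --- from $\langle g^M,\udim(N)\rangle$ to $\dim_k\Hom_A(P_0,N)-\dim_k\Hom_A(P_1,N)$, then the four-term exact sequence $0\to\Hom_A(M,N)\to\Hom_A(P_0,N)\to\Hom_A(P_1,N)\to\Tr M\otimes_A N\to 0$ obtained from $\Hom_A(P_i,A)\otimes_A N\cong\Hom_A(P_i,N)$ and right-exactness of $-\otimes_A N$, and finally $D(\Tr M\otimes_A N)\cong\Hom_A(N,D\Tr M)=\Hom_A(N,\tau_A M)$ --- is exactly the classical derivation. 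One point worth making explicit: the minimality of the projective presentation is used not in the exactness argument itself but in ensuring that the module you build as $\CoKer(f^*)$ is literally $\Tr M$ (so that its dual is $\tau_A M$ rather than $\tau_A M$ plus an injective summand), and simultaneously that the left-hand side is the $g$-vector as defined in the paper; you do state that you take the minimal presentation, so the proof is complete as written.
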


\section{Main results}\label{sec:mainresults}

In this section we state and prove our main results. 
From now on, let  $n:=rk({}_AA)$ and fix some basic and $\tau$-rigid module $M$ in $\modu\,(A)$ with some TF-admissible decomposition 
$M= \bigoplus_{i=1}^t M_i.$
As we mentioned in the previous section, this determines a stratifying system $(\Delta_M,\leq)$ which, in turn, induces an Ext-projective stratifying system $(\Delta_M, \underline{Q}, \leq)$. 
Moreover, for each $i\in[1,t],$ we consider the following exact sequences:

(1) the canonical exact sequence
$$\varepsilon_i:\; 0\to  K(i) \xrightarrow{\iota_i} M_i \xrightarrow{\pi_i} \Delta_M(i) \rightarrow 0 $$
of $M_i$ with respect to the torsion pair $\left( \Fac(\bigoplus_{l>i} M_l), (\bigoplus_{l>i} M_l)^\perp \right);$   
\

(2) the exact sequence 
$$\eta_i:\; 0\to U(i)\xrightarrow{\alpha_i} Q(i)\xrightarrow{\beta_i}\Delta_M(i)\to 0$$
associated to each $\Delta_M(i),$ see Definition \ref{def:epss} (b).
We also consider the following integer matrices:
\

(3) the {\bf $G$-matrix} $G^M\in\Mat_{n\times t}(\mathbb{Z})$ of $M$ whose $i$-th column is $[G^M]^i:=(g^{M_i})^{tr};$
\

(4) the {\bf $M$-standard matrix} $S^M\in\Mat_{n\times t}(\mathbb{Z})$ whose $i$-th column is  
$[S^M]^i:=\udim(\Delta_M(i))^{tr};$
\

(5) the {\bf $M$-residual matrix} $R^M\in\Mat_{t\times t}(\mathbb{Z})$ whose $(i,t)$-coordinate is
$$[R^M]_{i,j}:=\dim_k\Hom_A(U(i),\Delta_M(j))-\dim_k\Hom_A(K(i),\Delta_M(j))$$

\begin{remark}
We note that in (3) we are incurring in an abuse of notation, since the $G$-matrices are only defined for $\tau$-tilting pairs, see \cite{DIJ, Fu2017, Treffinger2019}.
\end{remark}

An important lemma in the proof of our main theorem is the following.

\begin{lemma}\label{lem:M-theta}
For a basic and $\tau$-rigid module $M\in\modu\,(A)$ with a TF-admissible decomposition $M= \bigoplus_{i=1}^t M_i,$ and  the Ext-projective stratifying system $(\Delta_M, \underline{Q}, \leq)$ induced by $(\Delta_M,\leq),$ the following statements hold true.
\begin{itemize}
\item[$\mathrm{(a)}$] $\Hom_A(U(i),\Delta_M(j))=0$ and $\Hom_A(K(i),\Delta_M(j))=0$ if $i\geq j.$
\item[$\mathrm{(b)}$] We have the isomorphisms of $k$-vector spaces 
$$\Hom_A(M_i, \Delta_M(i)) \simeq \Hom_A(Q(i), \Delta_M(i))\simeq \End_A(\Delta_M(i))\quad\forall\,i\in[1,t].$$
\item[$\mathrm{(c)}$] For all $i,j\in[1,t],$ we have 
$$[C_{\Delta_M}]_{i,j}=\dim_k \Hom_A(M_i,\Delta_M(j))+[R^M]_{i,j}$$
\item[$\mathrm{(d)}$]  $\Hom_A(\Delta_M(j),\tau M_i)=0$ and $\Ext^1_A(M_i,\Delta_M(j))=0$  $\forall\,i,j.$
\end{itemize}
\end{lemma}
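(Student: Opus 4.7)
The plan is to establish the four items in the order (a), (d), (b), (c), so that each builds on the previous ones.

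For (a), I would argue separately for $U(i)$ and $K(i)$. For $U(i)$, use that by Definition~\ref{def:epss}(b) the module $U(i)$ admits a filtration with factors in $\{\Delta_M(l):l>i\}$, and the stratifying system axiom (Definition~\ref{def:stratsys}(a)) forces $\Hom_A(\Delta_M(l),\Delta_M(j))=0$ whenever $l>j$; a short induction on the length of the filtration, applied to each short exact sequence coming from the filtration, then gives $\Hom_A(U(i),\Delta_M(j))=0$ once $i\ge j$ (so all $l>i\ge j$). For $K(i)$, note that $\varepsilon_i$ places $K(i)$ in $\Fac(\bigoplus_{l>i}M_l)$, while $\Delta_M(j)=\f_{j+1}(M_j)$ lies in the torsion free class $(\bigoplus_{l>j}M_l)^\perp$; for $i\ge j$ we have $\bigoplus_{l>i}M_l\in\add(\bigoplus_{l>j}M_l)$, so $\Delta_M(j)$ is in $(\bigoplus_{l>i}M_l)^\perp$ as well, killing $\Hom_A(K(i),\Delta_M(j))$ by the torsion pair property.

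For (d), observe that $\Delta_M(j)$ is the image of $\pi_j$, hence a quotient of $M_j$, so $\Delta_M(j)\in\Fac(M)$. Since $M$ is $\tau$-rigid, $\Fac(M)\subseteq{}^\perp(\tau M)$, which yields $\Hom_A(\Delta_M(j),\tau M_i)=0$. Then the Auslander--Reiten formula $D\Ext_A^1(M_i,\Delta_M(j))\simeq\underline{\Hom}_A(\Delta_M(j),\tau M_i)$ immediately gives $\Ext_A^1(M_i,\Delta_M(j))=0$. For (b), apply $\Hom_A(-,\Delta_M(i))$ to $\varepsilon_i$ and to $\eta_i$; in each long exact sequence the last term on the Hom-line is $\Hom_A(K(i),\Delta_M(i))$ or $\Hom_A(U(i),\Delta_M(i))$, both of which vanish by (a). This reduces each to the isomorphism with $\Hom_A(\Delta_M(i),\Delta_M(i))=\End_A(\Delta_M(i))$.

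For (c), I would run the long exact sequences obtained by applying $\Hom_A(-,\Delta_M(j))$ to $\eta_i$ and to $\varepsilon_i$ in parallel. The $\eta_i$-sequence reads
\[
0\to\Hom_A(\Delta_M(i),\Delta_M(j))\to\Hom_A(Q(i),\Delta_M(j))\to\Hom_A(U(i),\Delta_M(j))\to\Ext_A^1(\Delta_M(i),\Delta_M(j))\to 0,
\]
where the right-hand zero comes from the Ext-projectivity of $Q(i)$ in $\F(\Delta_M)$ (Definition~\ref{def:epss}(c)). The $\varepsilon_i$-sequence ends similarly with $\Ext_A^1(M_i,\Delta_M(j))=0$ by (d), giving
\[
0\to\Hom_A(\Delta_M(i),\Delta_M(j))\to\Hom_A(M_i,\Delta_M(j))\to\Hom_A(K(i),\Delta_M(j))\to\Ext_A^1(\Delta_M(i),\Delta_M(j))\to 0.
\]
Taking alternating dimension sums in both sequences and subtracting eliminates $\dim_k\Hom_A(\Delta_M(i),\Delta_M(j))$ and $\dim_k\Ext_A^1(\Delta_M(i),\Delta_M(j))$, leaving precisely
\[
\dim_k\Hom_A(Q(i),\Delta_M(j))-\dim_k\Hom_A(M_i,\Delta_M(j))=\dim_k\Hom_A(U(i),\Delta_M(j))-\dim_k\Hom_A(K(i),\Delta_M(j)),
\]
which is the required identity upon recalling the definition of $[C_{\Delta_M}]_{i,j}$ and $[R^M]_{i,j}$.

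The main obstacle is (a): once the vanishings there (and the $\tau$-rigidity input for (d)) are in place, the rest is a careful but routine bookkeeping of two four-term exact sequences. The subtle point in (a) is remembering that the torsion pair governing $\varepsilon_i$ is $(\Fac(\bigoplus_{l>i}M_l),(\bigoplus_{l>i}M_l)^\perp)$ and not the one defining $\Delta_M(j)$, so the inclusion of torsion free classes has to be checked directly from $i\ge j$.
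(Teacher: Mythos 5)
Your proposal is correct, and all the key ingredients match the paper's: the filtration argument plus the inclusion of torsion-free classes for (a), the $\tau$-rigidity of $M$ together with the Auslander--Reiten formula for (d), and the comparison of the sequences $\eta_i$ and $\varepsilon_i$ under $\Hom_A(-,\Delta_M(j))$ for (b) and (c). The one organizational difference is in how that comparison is carried out: the paper uses Ext-projectivity of $M_i$ in $\F(\Delta_M)$ to build a morphism of short exact sequences from $\varepsilon_i$ to $\eta_i$, observes that the left square is a pushout, and extracts the single exact sequence $0\to K(i)\to U(i)\oplus M_i\to Q(i)\to 0$, to which it applies $\Hom_A(-,\Delta_M(j))$ once; you instead keep the two four-term long exact sequences separate and cancel the common terms $\Hom_A(\Delta_M(i),\Delta_M(j))$ and $\Ext^1_A(\Delta_M(i),\Delta_M(j))$ by an alternating-sum count. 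Your route avoids constructing the comparison maps and the pushout (at the cost of needing $\Ext^1_A(Q(i),\Delta_M(j))=0$ and $\Ext^1_A(M_i,\Delta_M(j))=0$ to truncate both sequences, which you correctly supply from Definition~\ref{def:epss}(c) and from (d), proved first -- a sensible reordering); the paper's pushout sequence has the added benefit of being reused later, e.g.\ in the proof of Theorem~\ref{MTM}(b).
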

\begin{proof} Fix some $i\in[1,t].$ Then, we have the exact sequence
$$\eta_i:\; 0 \rightarrow U(i) \xrightarrow{\alpha_i} Q(i) \xrightarrow{\beta_i} \Delta_M(i) \rightarrow 0,$$
where $U(i)\in \F(\Delta_{M}(j) : j >i ),$ and the  canonical  exact sequence 
$$\varepsilon_i:\; 0 \rightarrow K(i) \xrightarrow{\iota_i} M_i \xrightarrow{\pi_i} \Delta_M(i) \rightarrow 0 $$
of $M_i$ with respect to the torsion pair $(\X_i,\Y_i):=\left( \Fac(\bigoplus_{l>i} M_l), (\bigoplus_{l>i} M_l)^\perp \right).$ Moreover, by \cite[Corallary 3.6(c)]{Mendoza2019}, we have that $\F(\Delta_M)$ is a subcategory of $\Fac(M).$

We know that $M_i$ is Ext-projective in $\Fac(M)$.
Since $\F(\Delta_M)\subseteq\Fac(M)$, we have in particular that $M_i$ is Ext-projective in $\F(\Delta_M)$.
Hence, we can complete the following commutative diagram
$$\xymatrix{
 0\ar[r]& K(i) \ar[r]^{\iota_i}\ar@{.>}[d]^{q} & M_i\ar[r]^{\pi_i}\ar@{.>}[d]^{p} & \Delta_M(i   ) \ar[r]\ar@{=}[d] &0 \\
 0\ar[r]& U(i)\ar[r]^{\alpha_i} & Q(i) \ar[r]^{\beta_i} &\Delta_M(i) \ar[r]&0. }$$ 
Note that the first square of the above diagram is the push-out of $q$ and $\alpha_i.$
Hence, there exist an  exact sequence 
$0 \xrightarrow{ } K(i) \xrightarrow{ } U(i)\bigoplus M_i \xrightarrow{ } Q(i) \xrightarrow{ } 0$
to which we apply the functor $\Hom_A(-,\Delta_M (j))$ to obtain the following one
$$(*):\;0 \xrightarrow{ } \Hom_A(Q(i),\Delta_M(j)) \to \Hom_A(U(i)\oplus M_i,\Delta_M(j)) \to \Hom_A(K(i),\Delta_M(j))\to 0.$$

(a) Let $i\geq j.$ Since $U(i)\in \F(\Delta_{M}(r) : r>i)$ and $\Hom_A( \Delta_{M}(r),\Delta_{M}(j))=0$ $\forall\,r>i,$ we can conclude that 
$\Hom_A(U(i),\Delta_M(j))=0.$ On the other hand, by using that $$K(i)\in\Fac\left(\bigoplus_{l>i} M_l\right)\subseteq \Fac\left(\bigoplus_{l>j} M_l\right),$$  it follows that 
$\Hom_A(K(i),\Delta_M(j))=0.$
\

(b) Take $i=j$ in $(*).$ Then, from (a),  $\Hom_A(K(i),\Delta_M(i))=0$ and $\Hom_A(U(i),\Delta_M(i))=0.$ Hence, by $(*)$ we get that $\Hom_A(Q(i),\Delta_M(i))\simeq\Hom_A(M_i,\Delta_M(i)).$ Finally, the isomorphism 
$\Hom_A(Q(i), \Delta_M(i))\simeq \End_A(\Delta_M(i))$ follows from Lemma \ref{LCMTheta}.
\

(c) It follows by applying $\dim_k$ on the exact sequence $(*).$
\

(d) Since $\tau M\simeq\bigoplus_{r=1}^t \tau M_r,$ it is enough to show that $\Hom_A(\Delta_M(j),\tau(M))=0$ $\forall\, j\in[1,t].$ But, the aforementioned statement holds true since $\Delta_M(j)\in \Fac(M_j)\subseteq \Fac(M)$ and $M$ is $\tau$-rigid. Finally, we have
$\Ext^1_A(M_i,\Delta_M(j))\simeq D\overline{\Hom}_A(\Delta_M(j),\tau M_i)=0.$
\end{proof}

Now, we are ready to announce and prove our first main result.

\begin{theorem}\label{MTM} For a basic and $\tau$-rigid module $M\in\modu\,(A)$ with a TF-admissible decomposition $M= \bigoplus_{i=1}^t M_i,$ and  the Ext-projective stratifying system $(\Delta_M, \underline{Q}, \leq)$ induced by $(\Delta_M,\leq),$ the following statements hold true.
\begin{itemize}
\item[$\mathrm{(a)}$] The matrix $(G^M)^{tr}\,S^M$ is upper triangular and  $[(G^M)^{tr}\,S^M]_{i,j}= \dim_k \Hom_A(M_i,\Delta_M(j))$ $\forall\,i,j.$
\item[$\mathrm{(b)}$] $R^M$ is an upper triangular matrix with zeros in its diagonal. Moreover, we have that 
\begin{center}
$R^M=0$ $\Leftrightarrow$ $\Hom_A(M_i,\Delta_M(j))\simeq \Hom_A(Q(i),\Delta_M(j))$ $\forall\,i<j.$
\end{center}
\item[$\mathrm{(c)}$] $C_{\Delta_M}=(G^M)^{tr}\,S^M+R^M.$
\item[$\mathrm{(d)}$]  $R^M=0$ if $M\in\F(\Delta_M).$ 
\item[$\mathrm{(e)}$]  $|G_{\Delta_M}| = \prod_{i=1}^t [(G^M)^{tr}\,S^M]_{i,i}.$ 
\end{itemize}
\end{theorem}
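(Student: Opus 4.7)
The plan is to dispatch (a)--(e) in order, leveraging Lemma~\ref{lem:M-theta} and Theorem~\ref{formula}. For (a), I would compute the $(i,j)$-entry of $(G^M)^{tr}\,S^M$ directly as the standard inner product $\langle g^{M_i},\udim(\Delta_M(j))\rangle$; Theorem~\ref{formula} rewrites this as $\dim_k\Hom_A(M_i,\Delta_M(j)) - \dim_k\Hom_A(\Delta_M(j),\tau M_i)$, and Lemma~\ref{lem:M-theta}(d) kills the second summand for every pair $(i,j)$, yielding the desired identity. For upper triangularity I would use that $\Delta_M(j)=\f_{j+1}(M_j)$ lies in $(\bigoplus_{l>j} M_l)^\perp$, so for $i>j$ the module $M_i$ is a direct summand of $\bigoplus_{l>j} M_l$ and $\Hom_A(M_i,\Delta_M(j))=0$.

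For (b), Lemma~\ref{lem:M-theta}(a) already gives $[R^M]_{i,j}=0$ whenever $i\geq j$, handling upper triangularity and vanishing on the diagonal in one stroke. The characterization of $R^M=0$ then reduces to the case $i<j$; here I would invoke the exact sequence $(*)$ constructed in the proof of Lemma~\ref{lem:M-theta}, whose alternating dimension sum rearranges to $\dim_k\Hom_A(Q(i),\Delta_M(j)) = \dim_k\Hom_A(M_i,\Delta_M(j)) + [R^M]_{i,j}$. This identity simultaneously yields (c) (since $[C_{\Delta_M}]_{i,j}=\dim_k\Hom_A(Q(i),\Delta_M(j))$ and part (a) identifies the $(G^M)^{tr}\,S^M$ summand entrywise) and shows that $[R^M]_{i,j}=0$ for $i<j$ is equivalent to the stated isomorphism $\Hom_A(M_i,\Delta_M(j))\simeq \Hom_A(Q(i),\Delta_M(j))$.

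Part (d) is the step I expect to require the most care, and my plan is to prove the stronger statement that $Q(i)\simeq M_i$ and deduce $R^M=0$ as a corollary. Since $M$ is $\tau$-rigid, each $M_i$ is Ext-projective in $\Fac(M)$; combined with the inclusion $\F(\Delta_M)\subseteq\Fac(M)$ used in Lemma~\ref{lem:M-theta}, this forces $M_i$ to be Ext-projective in $\F(\Delta_M)$ as well. Under the hypothesis $M\in\F(\Delta_M)$, the indecomposable $M_i$ itself lies in $\F(\Delta_M)$ and surjects onto $\Delta_M(i)$ via the canonical sequence $\varepsilon_i$. Since the indecomposable Ext-projectives of $\F(\Delta_M)$ are (up to isomorphism) precisely the $Q(j)$, each mapping minimally onto its own $\Delta_M(j)$, the uniqueness of the Ext-projective cover of $\Delta_M(i)$ forces $M_i\simeq Q(i)$. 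The ensuing equality of Hom-dimensions together with the characterization in (b) then delivers $R^M=0$.

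Finally (e) is short bookkeeping: Lemma~\ref{LCMTheta} expresses $|G_{\Delta_M}|$ as the product of the diagonal entries of $C_{\Delta_M}$, and since $R^M$ has zero diagonal by (b), these diagonal entries coincide with those of $(G^M)^{tr}\,S^M$ by (c), giving the claimed formula.
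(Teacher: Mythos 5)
Your treatment of parts (a), (b), (c), and (e) follows essentially the same route as the paper: Theorem~\ref{formula} plus Lemma~\ref{lem:M-theta}(d) give the entrywise identity in (a), Lemma~\ref{lem:M-theta}(a) and the exact sequence $(*)$ give (b) and (c), and (e) is bookkeeping via Lemma~\ref{LCMTheta}. One small divergence is welcome: for the upper-triangularity of $(G^M)^{tr}S^M$ you argue directly that $\Delta_M(j)\in\left(\bigoplus_{l>j}M_l\right)^{\perp}$ forces $\Hom_A(M_i,\Delta_M(j))=0$ for $i>j$, whereas the paper obtains this indirectly from the upper-triangularity of $C_{\Delta_M}$ together with (b) and (c). Your version is more self-contained and arguably cleaner.

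Part (d) is where you genuinely depart from the paper: the paper simply cites the proof of \cite[Corollary 3.8]{Mendoza2019} to conclude that $\eta_i$ and $\varepsilon_i$ are isomorphic, while you aim to re-derive $Q(i)\simeq M_i$ from the classification of indecomposable Ext-projectives in $\F(\Delta_M)$. The strategy is sound, but as written there is a gap in the final step. You invoke ``the uniqueness of the Ext-projective cover of $\Delta_M(i)$'' to force $M_i\simeq Q(i)$, but to apply that uniqueness you would need to know that $\varepsilon_i$ has the required shape, i.e.\ that $K(i)\in\F\big(\{\Delta_M(j):j>i\}\big)$ --- which is precisely part of what is being proved (it is the content of the implication (a)$\Rightarrow$(e) in Proposition~\ref{MinFD}). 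A clean way to close the gap without that verification is the following: since $\F(\Delta_M)$ is closed under direct summands, each $M_i$ is an indecomposable Ext-projective in $\F(\Delta_M)$, hence $M_i\simeq Q(\sigma(i))$ for some permutation $\sigma$ of $[1,t]$; the surjection $M_i\twoheadrightarrow\Delta_M(i)$ then gives $\Hom_A\big(Q(\sigma(i)),\Delta_M(i)\big)\neq 0$, and the upper-triangularity of $C_{\Delta_M}$ (Lemma~\ref{LCMTheta}) forces $\sigma(i)\le i$ for every $i$, which for a permutation means $\sigma=\mathrm{id}$. With that repair your argument is correct and provides a more explicit, self-contained alternative to the paper's citation.
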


\begin{proof} 
Let $i,j\in[1,t].$ 
Then, by Theorem \ref{formula} we get 
$$[(G^M)^{tr}\,S^M]_{i,j}=\langle g^{M_i},\udim(\Delta_M(j))\rangle=\dim_k(\Hom_A(M_i,\Delta_M(j)))-\dim_k(\Hom_A(\Delta_M(j),\tau_A M_i)).$$
On the other hand, from Lemma \ref{lem:M-theta} (d), we know that $\Hom_A(\Delta_M(j),\tau_A M_i)=0,$ proving that 
$$[(G^M)^{tr}\,S^M]_{i,j}=\dim_k(\Hom_A(M_i,\Delta_M(j))).$$
Thus, from Lemma \ref{lem:M-theta} (c), we also get the equality $C_{\Delta_M}=(G^M)^{tr}\,S^M+R^M,$ proving (c). Moreover, by 
Lemma \ref{lem:M-theta} (a)  and the exact sequence $(*)$ in the proof of Lemma \ref{lem:M-theta}, we can obtain (b). 
Also, since $C_{\Delta_M}$ is upper triangular (see Lemma \ref{LCMTheta}), then by (b) and (c) it follows that the matrix $(G^M)^{tr}\,S^M$ is upper triangular. Finally, (e) follows directly from (b), (c) and Lemma~\ref{LCMTheta}.
\

Assume now that $M\in\F(\Delta_M).$ Then, by the proof of \cite[Corollary 3.8]{Mendoza2019}, we get that the exact sequences $\eta_i$ and 
$\varepsilon_i$ are isomorphic, for each $i\in[1,t].$ Therefore we obtain (d).
\end{proof}

As we have seen in Theorem \ref{MTM} (d), the condition $M\in\F(\Delta_M)$ is enough to get that $R^M=0.$ 
In the following, we characterise when 
$M\in\F(\Delta_M).$ 

\begin{proposition}\label{MinFD} Let $M\in\modu\,(A)$ be a basic and $\tau$-rigid module  with a TF-admissible decomposition $M= \bigoplus_{i=1}^t M_i,$ and let 
$(\Delta_M, \underline{Q}, \leq)$ be the Ext-projective stratifying system  induced by $(\Delta_M,\leq).$ Then, for any $i\in[1,t],$ the following statements are equivalent.
\begin{itemize}
\item[(a)] $M_i\in\F(\Delta_M).$
\item[(b)] The exact sequences $\eta_i$ and $\varepsilon_i$ are isomorphic.
\item[(c)] $Q(i)\simeq M_i.$ 
\item[(d)] $\Ext^1_A(Q(i), K(i))=0.$ 
\item[(e)] $K(i)\simeq U(i).$ 
\end{itemize}
\end{proposition}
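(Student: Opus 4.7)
The plan is to establish the cycle $\mathrm{(a)}\Rightarrow\mathrm{(c)}\Rightarrow\mathrm{(b)}\Rightarrow\mathrm{(a)}$, together with the equivalence $\mathrm{(c)}\Leftrightarrow\mathrm{(d)}$ and the implications $\mathrm{(b)}\Rightarrow\mathrm{(e)}$ (immediate from inspection of the isomorphism of sequences) and $\mathrm{(e)}\Rightarrow\mathrm{(d)}$; this loops through all five statements. For $\mathrm{(b)}\Rightarrow\mathrm{(a)}$ one only needs that $Q(i)$ is an extension of $\Delta_M(i)$ by $U(i)\in\F(\Delta_M)$, hence lies in $\F(\Delta_M)$ which is closed under extensions.

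The central step, and main obstacle, is $\mathrm{(a)}\Rightarrow\mathrm{(c)}$. Since $M_i$ is indecomposable and Ext-projective in $\Fac(M)\supseteq\F(\Delta_M)$, the hypothesis $M_i\in\F(\Delta_M)$ forces $M_i\simeq Q(\sigma)$ for some $\sigma\in[1,t]$. The nonzero surjection $\pi_i\colon M_i\twoheadrightarrow\Delta_M(i)$ yields $\Hom_A(Q(\sigma),\Delta_M(i))\neq 0$, and the upper-triangularity of $C_{\Delta_M}$ from Lemma~\ref{LCMTheta} gives $\sigma\leq i$. To rule out $\sigma<i$, I would transport the nonzero map $\beta_\sigma\colon Q(\sigma)\to\Delta_M(\sigma)$ along the isomorphism $Q(\sigma)\simeq M_i$, obtaining a nonzero $f\colon M_i\to\Delta_M(\sigma)$. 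Since $\sigma+1\leq i$, we have $\Delta_M(\sigma)\in(\bigoplus_{l>i}M_l)^{\perp}$, so $f$ kills the torsion submodule $K(i)$ of $M_i$ and factors through $\Delta_M(i)$, producing a nonzero map $\Delta_M(i)\to\Delta_M(\sigma)$ with $i>\sigma$, violating the stratifying axiom.

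For $\mathrm{(c)}\Rightarrow\mathrm{(b)}$, the key observation is that $\eta_i$ is itself a canonical sequence for $Q(i)$: the module $U(i)$ is filtered by $\{\Delta_M(j)\}_{j>i}\subseteq\Fac(\bigoplus_{l>i}M_l)$, so $U(i)\in\Fac(\bigoplus_{l>i}M_l)$, while $\Delta_M(i)\in(\bigoplus_{l>i}M_l)^{\perp}$ by construction. Uniqueness of canonical sequences, under the hypothesis $Q(i)\simeq M_i$, then yields $\eta_i\simeq\varepsilon_i$. The implication $\mathrm{(c)}\Rightarrow\mathrm{(d)}$ is direct from $K(i)\in\Fac(M)$ and the Ext-projectivity of $M_i$ in $\Fac(M)$; and $\mathrm{(e)}\Rightarrow\mathrm{(d)}$ follows because $Q(i)$ is Ext-projective in $\F(\Delta_M)$ and $U(i)\in\F(\Delta_M)$, so $\Ext^1_A(Q(i),U(i))=0$, which transfers across the isomorphism $K(i)\simeq U(i)$.

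The remaining implication $\mathrm{(d)}\Rightarrow\mathrm{(c)}$ is where the push-out sequence shines. From the proof of Lemma~\ref{lem:M-theta} we have the short exact sequence $0\to K(i)\to M_i\oplus U(i)\to Q(i)\to 0$, which under the hypothesis $\Ext^1_A(Q(i),K(i))=0$ splits to give $M_i\oplus U(i)\simeq K(i)\oplus Q(i)$. The Krull-Schmidt theorem then matches the indecomposable $M_i$ with a summand of the right-hand side, and TF-admissibility ($M_i\notin\Fac(\bigoplus_{l>i}M_l)$) rules out any summand of $K(i)$, forcing $M_i\simeq Q(i)$.
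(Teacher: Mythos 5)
Your proof is correct, and it takes a genuinely different route from the paper's. The paper proves the linear chain $\mathrm{(a)}\Rightarrow\mathrm{(b)}\Rightarrow\mathrm{(c)}\Rightarrow\mathrm{(d)}\Rightarrow\mathrm{(e)}\Rightarrow\mathrm{(a)}$: for $\mathrm{(a)}\Rightarrow\mathrm{(b)}$ it simply invokes the proof of \cite[Corollary 3.8]{Mendoza2019}, and for $\mathrm{(d)}\Rightarrow\mathrm{(e)}$ it applies $\Hom_A(Q(i),-)$ to $\varepsilon_i$ to produce a lift $q\colon Q(i)\to M_i$ of $\beta_i$, pairs it with the lift $p\colon M_i\to Q(i)$ already present in the push-out diagram of Lemma~\ref{lem:M-theta}, and concludes $q$ is an isomorphism from the right minimality of $\beta_i$ (citing \cite[Lemma 2.3]{Marcos2005a}). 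You instead prove $\mathrm{(a)}\Rightarrow\mathrm{(c)}$ directly, by recognising $M_i$ as an indecomposable Ext-projective of $\F(\Delta_M)$ lying in $\F(\Delta_M)$ and hence equal to some $Q(\sigma)$, then pinning down $\sigma=i$ via upper-triangularity of $C_{\Delta_M}$ on one side and the stratifying axiom plus torsion considerations on the other; and for $\mathrm{(d)}\Rightarrow\mathrm{(c)}$ you split the push-out sequence $0\to K(i)\to M_i\oplus U(i)\to Q(i)\to 0$ and apply Krull--Schmidt, with TF-admissibility ruling out $M_i$ landing inside $K(i)$. The trade-off: the paper's route is shorter on the page but leans on the reader unpacking an argument from \cite{Mendoza2019} and on right minimality, while your route is fully self-contained and replaces right minimality with the more elementary Krull--Schmidt argument, at the cost of an extra case analysis in $\mathrm{(a)}\Rightarrow\mathrm{(c)}$ (showing $\sigma\leq i$ and then $\sigma\not<i$). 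Your step $\mathrm{(c)}\Rightarrow\mathrm{(b)}$ via uniqueness of the canonical sequence is also a nice shortcut the paper does not use (it gets $\mathrm{(b)}$ first, making $\mathrm{(b)}\Rightarrow\mathrm{(c)}$ trivial).
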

\begin{proof} (a) $\Rightarrow$ (b) It follows by the proof of \cite[Corollary 3.8]{Mendoza2019}.
\

(b) $\Rightarrow$ (c) It is trivial.
\

(c) $\Rightarrow$ (d) It follows from $\Ext^1_A(M,\Fac(M))=0$ since $K(i)\in\Fac(M).$ 
\

(d) $\Rightarrow$ (e)  Since $\Ext^1_A(Q(i), K(i))=0,$ there is some $q: Q(i)\to M_i$ with $\pi_iq=\beta_i.$ On the other hand 
by the proof of Lemma \ref{lem:M-theta}, there is some $p:M_i\to Q(i)$ with $\beta_ip=\pi_i.$ Since $\beta_i(pq)=\beta_i$ and 
$\beta_i:Q(i)\to \Delta_M(i)$ is right minimal \cite[Lemma 2.3]{Marcos2005a}, we get that $pq$ is an isomorphism, and thus 
$q:Q(i)\xrightarrow{\thicksim} M_i$ since $Q(i)$ and $M_i$ are indecomposable. Thus, we have that the exact sequences $\eta_i$ and 
$\varepsilon_i$ are isomorphic. In particular, $K(i)\simeq U(i).$ 
\

(e) $\Rightarrow$ (a) By the exact sequence $\varepsilon_i$ and the fact that $\F(\Delta_M)$ is closed under extensions, 
we conclude that $M_i\in\F(\Delta_M).$
\end{proof}

\begin{remark}
Recall from \cite{King1994} that given $M\in\modu(A)$ and a vector $v\in \mathbb{R}^n,$ we say that $M$ is $v$-semistable if $\langle v, \udim M \rangle =0$ and $\langle v, \udim L \rangle \leq 0$ for all submodule $L$ of $M$.
Using this notion of stability, one can construct (for every finite dimesional algebra $A$) a geometric invariant known as the \textit{wall-and-chamber structure} of $A$. 
It was shown in \cite{BST2019} that the wall-and-chamber structure of an algebra $A$ can be described by its $\tau$-tilting theory, via the $g$-vectors of the indecomposable $\tau$-rigid objects.

Now, let $M\in\modu(A)$ be basic and $\tau$-rigid, with a TF-admissible decomposition 
$M= \bigoplus_{i=1}^t M_i,$ and let $\Delta_M$ be its associated stratifying system.
From the construction of $\Delta_M$ and the results of \cite{BST2019}, it follows that the $A$-module $\Delta_M(i)$ is $g^{M_j}$-semistable,  for all $1 \leq i \leq t$ and $j > i$.
It would be interesting to see if the Cartan matrix $C_{\Delta_M}$ has a geometric realisation in the wall-and-chamber structure of $A$.
\end{remark}

Let $\Lambda$ be a finite dimensional $k$-algebra. We recall from \cite{MMS2019} that the Cartan group $G_\Lambda$ of $\Lambda$ is the Cokernel of the Cartan map $C_\Lambda:K_0(\proj(\Lambda))\to K_0(\Lambda),\;[P_i]\mapsto\udim(P_i),$ where $\{P_i\}_{i=1}^n$ is a representative set of pairwise non-isomorphic indecomposable projective $\Lambda$-modules.

\begin{corollary} 
Let $M\in\modu\,(A)$ be basic and $\tau$-tilting and consider a TF-admissible decomposition $M= \bigoplus_{i=1}^t M_i$ such that 
$M\in\F(\Delta_M).$  Then, for the algebra $B:=\End_A(M)^{op},$ we have that $G_B\simeq \CoKer(S^M).$
\end{corollary}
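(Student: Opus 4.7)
The plan is to factor the Cartan matrix $C_B$ as a product of unimodular matrices times $S^M$; then the cokernel of $C_B$ agrees with that of $S^M.$ First, the hypothesis $M\in\F(\Delta_M)$ together with the proof of Theorem \ref{MTM}(d) yields $\eta_i\simeq \varepsilon_i$ for every $i,$ so $Q(i)\simeq M_i$ and $Q\simeq M$ as basic $A$-modules. In particular $B\simeq \End_A(Q)^{op},$ the indecomposable projective $B$-modules are $P^B(i):=\Hom_A(Q,Q(i)),$ and since $B$ is basic over an algebraically closed field, we have
$$[C_B]_{i,j}=\dim_k\Hom_B(P^B(i),P^B(j))=\dim_k\Hom_A(Q(i),Q(j)).$$

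The main step is to express $C_B$ as $C_{\Delta_M}\,N$ for some unimodular matrix $N.$ I would apply $\Hom_A(Q(i),-)$ to the exact sequence $\eta_j:\,0\to U(j)\to Q(j)\to \Delta_M(j)\to 0$; since $Q(i)$ is Ext-projective in $\F(\Delta_M)$ and $U(j)\in\F(\Delta_M),$ the resulting sequence of Hom-spaces is short exact. Iterating this along a $\Delta_M$-filtration of $U(j)$ yields
$$\dim_k\Hom_A(Q(i),Q(j))=\sum_k [Q(j):\Delta_M(k)]\cdot\dim_k\Hom_A(Q(i),\Delta_M(k)).$$
Defining $N\in\Mat_{t\times t}(\mathbb{Z})$ by $N_{k,j}:=[Q(j):\Delta_M(k)]$ gives $C_B=C_{\Delta_M}\,N.$ Since $U(j)\in\F(\Delta_M(k):k>j)$ and $[Q(j):\Delta_M(j)]=1,$ the matrix $N$ is lower triangular with unit diagonal, so $N\in GL_t(\mathbb{Z}).$

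Finally, I would combine this with Theorem \ref{MTM}: the hypothesis $M\in\F(\Delta_M)$ together with part (d) gives $R^M=0,$ so part (c) yields $C_{\Delta_M}=(G^M)^{tr}\,S^M.$ As $M$ is $\tau$-tilting we have $t=n,$ and by the classical result of Adachi--Iyama--Reiten the $g$-vectors $g^{M_1},\dots,g^{M_n}$ form a $\mathbb{Z}$-basis of $K_0(\proj A)\simeq \mathbb{Z}^n,$ so $G^M$ and hence $(G^M)^{tr}$ lie in $GL_n(\mathbb{Z}).$ Therefore $C_B=(G^M)^{tr}\,S^M\,N$ with unimodular factors on both sides of $S^M,$ and since multiplying an integer matrix by unimodular matrices on either side preserves its cokernel, we conclude $G_B=\CoKer(C_B)\simeq \CoKer(S^M).$ The main obstacle is the factorisation in the middle step: while each individual ingredient (Ext-projectivity of $Q(i),$ the filtration of $U(j),$ short exactness of the Hom-sequences) is routine, one must carefully verify that the multiplicities $[Q(j):\Delta_M(k)]$ assemble into a triangular and hence unimodular matrix witnessing $C_B=C_{\Delta_M}\,N.$
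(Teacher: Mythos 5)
Your proof is correct, and it follows the same overall architecture as the paper's: obtain $Q(i)\simeq M_i$ and $C_{\Delta_M}=(G^M)^{tr}S^M$ from Theorem~\ref{MTM} and Proposition~\ref{MinFD}, invoke unimodularity of $G^M$ from \cite[Theorem~5.1]{AIR}, and pass to cokernels. The one place where you take a different route is the link between $G_B$ and $\CoKer(C_{\Delta_M})$: the paper simply cites \cite[Theorem~4.2]{MMS2019}, while you reprove it inline by exhibiting the explicit factorisation $C_B=C_{\Delta_M}\,N$ with $N_{k,j}=[Q(j):\Delta_M(k)]$ lower unitriangular. That derivation is sound --- applying $\Hom_A(Q(i),-)$ along the $\Delta_M$-filtration of $Q(j)$ stays exact by Ext-projectivity of $Q(i)$ on $\F(\Delta_M)$, and $U(j)\in\F(\Delta_M(k):k>j)$ gives the triangularity and unit diagonal. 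What your version buys is self-containment and transparency about \emph{why} $G_B$ and $\CoKer(C_{\Delta_M})$ agree; what it costs is length, since the paper can delegate this to an already-published result. One small remark: the identity $[C_B]_{i,j}=\dim_k\Hom_A(Q(i),Q(j))$ depends on a left/right and op convention, but since transposing an integer square matrix does not change its cokernel (Smith normal form), this ambiguity is harmless and could be flagged in one clause.
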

\begin{proof} Let $(\Delta_M, \underline{Q}, \leq)$ be the Ext-projective stratifying system induced by $(\Delta_M,\leq).$ By Theorem \ref{MTM} and Proposition \ref{MinFD}, we get that $C_{\Delta_M}=(G^M)^{tr}\,S^M$ and $M_i\simeq Q(i)$ $\forall\,i\in[1,t].$ 
On the other hand, by \cite[Theorem~5.1]{AIR}, we conclude that the 
matrix $G^M\in\Mat_{n\times n}(\mathbb{Z})$  induces an isomorphism $\mathbb{Z}^n\to\mathbb{Z}^n,\;X\mapsto G^M\,X,$ of 
abelian groups. Therefore, $\CoKer(C_{\Delta_M})\simeq\CoKer(S^M)$ and then the result follows from \cite[Theorem 4.2]{MMS2019}.
\end{proof}

As we have already seen, the Cartan matrix $C_{\Theta}$ associated to a stratifying system $\Theta$ is an upper-triangular matrix. 
We now will characterise in Theorem~\ref{thm:diagonal} the stratifying systems coming from $\tau$-tilting modules $M$ whose associated Cartan matrix $C_{\Theta}$ is diagonal. 
Before proving our second main theorem, we need the following intermediate result.

\begin{proposition}\label{PMdiag} For a basic and $\tau$-rigid module $M\in\modu\,(A),$ with a TF-admissible decomposition $M= \bigoplus_{i=1}^t M_i,$  the following statements (a), (b) and (c) are equivalent.
\begin{itemize}
\item[$\mathrm{(a)}$] The matrix $(G^M)^{tr}\,S^M$ is diagonal and $\Hom_A(M_i, K(j))=0$ for $i<j.$
\item[$\mathrm{(b)}$] $\Hom_A(M_i, M_j)=0$ for $i<j.$
\item[$\mathrm{(c)}$] For $i<j,$ we have that $\Hom_A(\Delta_M(i), \Delta_M(j))=0,$ $\Hom_A(M_i, K(j))=0$ and the map
$$\Hom_A(K(i),\Delta_M(j))\to\Ext^1_A(\Delta_M(i), \Delta_M(j)),\;f\mapsto f\cdot\varepsilon_i$$
is an isomorphism.
\end{itemize}
Moreover, for $B:=\End_A(M)^{op}$ and $d_i:=\dim_k\End_A(\Delta_M(i)),$  we have that  
$G_B\simeq\bigoplus_{i=1}^t\,\mathbb{Z}/d_i\mathbb{Z}$ if $M\in\F(\Delta_M)$ and $(G^M)^{tr}\,S^M$ is diagonal.
\end{proposition}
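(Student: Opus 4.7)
The plan is to establish the cycle (a) $\Rightarrow$ (b) $\Rightarrow$ (c) $\Rightarrow$ (a) through two short exact sequences obtained by applying $\Hom_A(M_i,-)$ to $\varepsilon_j$ and $\Hom_A(-,\Delta_M(j))$ to $\varepsilon_i$, combined with Theorem \ref{MTM}(a). The moreover assertion will then follow by combining Theorem \ref{MTM}(d), Corollary \ref{CEDiagCtheta}, and Proposition \ref{MinFD}.

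First I would observe that $K(j)\in\Fac(\bigoplus_{l>j}M_l)\subseteq\Fac(M)$, and since $M$ is $\tau$-rigid we have $\Fac(M)\subseteq{}^\perp(\tau M)$; the Auslander--Reiten formula then gives $\Ext^1_A(M_i,K(j))=0$ for every $i,j$. Applying $\Hom_A(M_i,-)$ to $\varepsilon_j$ therefore yields the short exact sequence
\begin{equation*}
0 \to \Hom_A(M_i,K(j)) \to \Hom_A(M_i,M_j) \to \Hom_A(M_i,\Delta_M(j)) \to 0,
\end{equation*}
and by Theorem \ref{MTM}(a) the third term has dimension $[(G^M)^{tr}S^M]_{i,j}$ and the matrix $(G^M)^{tr}S^M$ is upper triangular. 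The equivalence (a) $\Leftrightarrow$ (b) is then immediate: diagonality of $(G^M)^{tr}S^M$ amounts to the vanishing of the third term for $i<j$, and together with the vanishing of the first term it is equivalent to the vanishing of the middle term.

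For the link with (c), I would apply $\Hom_A(-,\Delta_M(j))$ to $\varepsilon_i$ and use Lemma \ref{lem:M-theta}(d) to see $\Ext^1_A(M_i,\Delta_M(j))=0$, obtaining
\begin{equation*}
0 \to \Hom_A(\Delta_M(i),\Delta_M(j)) \to \Hom_A(M_i,\Delta_M(j)) \to \Hom_A(K(i),\Delta_M(j)) \xrightarrow{\partial_{i,j}} \Ext^1_A(\Delta_M(i),\Delta_M(j)) \to 0,
\end{equation*}
with $\partial_{i,j}(f)=f\cdot\varepsilon_i$. Assuming (b), the first sequence gives $\Hom_A(M_i,K(j))=\Hom_A(M_i,\Delta_M(j))=0$ for $i<j$, and feeding the latter vanishing into the second sequence produces the three conditions of (c). Conversely, assuming (c), the first term of the second sequence vanishes and $\partial_{i,j}$ is injective, which forces $\Hom_A(M_i,\Delta_M(j))=0$ for $i<j$; together with the remaining vanishing that is part of (c), this is exactly (a).

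For the moreover, $M\in\F(\Delta_M)$ forces $R^M=0$ by Theorem \ref{MTM}(d), so $C_{\Delta_M}=(G^M)^{tr}S^M$ is diagonal; Corollary \ref{CEDiagCtheta} then yields $Q(i)\simeq\Delta_M(i)$ and $\End_A(Q)\simeq\prod_{i=1}^t\End_A(Q(i))$, and Proposition \ref{MinFD} gives $M_i\simeq Q(i)$. Hence $B=\End_A(M)^{op}\simeq\prod_{i=1}^t\End_A(\Delta_M(i))^{op}$ as $k$-algebras, a product of local algebras of dimensions $d_i$. Since the Cartan group of a local $k$-algebra of dimension $d$ is $\mathbb{Z}/d\mathbb{Z}$ and Cartan groups distribute over products of algebras, $G_B\simeq\bigoplus_{i=1}^t\mathbb{Z}/d_i\mathbb{Z}$. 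The main technical point I expect is setting up the two exact sequences correctly and verifying the two $\Ext^1$-vanishings on which they rely; once these are in place, the equivalences reduce to a careful bookkeeping of the two four-term sequences.
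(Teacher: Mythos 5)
Your proof of the equivalence of (a), (b) and (c) is correct and follows essentially the same route as the paper: the same two four-term/short exact sequences obtained from $\varepsilon_j$ via $\Hom_A(M_i,-)$ (using $\Ext^1_A(M,\Fac(M))=0$) and from $\varepsilon_i$ via $\Hom_A(-,\Delta_M(j))$ (using Lemma \ref{lem:M-theta}(d)), combined with Theorem \ref{MTM}(a). That part needs no changes.

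The \emph{moreover} part, however, has a genuine gap. You invoke Corollary \ref{CEDiagCtheta} to conclude $Q(i)\simeq\Delta_M(i)$ and $\End_A(Q)\simeq\prod_{i=1}^t\End_A(Q(i))$, but that corollary only asserts the \emph{equivalence} of its conditions (a)--(d) under diagonality of the Cartan matrix; its ``moreover'' clause applies only \emph{if one of those conditions holds}, and nothing in the present hypotheses guarantees any of them. In fact $Q(i)\simeq\Delta_M(i)$ is equivalent to $U(i)=0$, which can fail here: take $A$ the path algebra of $1\to 2$ and $M={}_AA=E\oplus S_2$ with $E=P$ the projective of length $2$, a TF-admissible decomposition with $M_1=E$, $M_2=S_2$. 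Then $\Delta_M(1)=S_1$, $\Delta_M(2)=S_2$, $M\in\F(\Delta_M)$, $C_{\Delta_M}=(G^M)^{tr}S^M$ is the identity matrix, yet $U(1)\simeq K(1)=S_2\neq 0$, $Q(1)=E\not\simeq\Delta_M(1)$, and $B=\End_A(M)^{op}$ is the $A_2$ path algebra, not a product of local algebras. So your claimed algebra decomposition of $B$ is false in general, even though the final isomorphism $G_B\simeq\bigoplus_i\mathbb{Z}/d_i\mathbb{Z}$ happens to remain true. The paper's argument avoids this entirely: from Theorem \ref{MTM} and Proposition \ref{MinFD} one gets $C_{\Delta_M}=(G^M)^{tr}S^M$ and $M_i\simeq Q(i)$, so \cite[Theorem 4.2]{MMS2019} gives $G_B\simeq\CoKer(C_{\Delta_M})$; since $C_{\Delta_M}$ is diagonal with $[C_{\Delta_M}]_{i,i}=d_i$ by Lemma \ref{lem:M-theta}(b), the cokernel is $\bigoplus_{i=1}^t\mathbb{Z}/d_i\mathbb{Z}$. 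You should replace your structural claim about $B$ with this cokernel computation.
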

\begin{proof}  Let $i,j\in[1,t].$ Consider the canonical exact sequence 
$$\varepsilon_j:\;0\to K(j)\to M_j\to \Delta_M(j)\to 0$$
of $M_j$ with respect to the torsion pair $\left(\Fac(\bigoplus_{l>j}M_l), (\bigoplus_{l>j}M_l)^\perp\right).$ Note that $\varepsilon_j$ lies in $\Fac(M);$ and since $\Ext^1_A(M,\Fac(M))=0,$ from $\varepsilon_j$ we get the exact sequence
$$(*):\; 0\to \Hom_A(M_i,K(j))\to \Hom_A(M_i, M_j)\to \Hom_A(M_i, \Delta_M(j))\to 0.$$

(a) $\Leftrightarrow$ (b): It follows from (*) and Theorem \ref{MTM} (a).
\

(a) $\Leftrightarrow$ (c): Let $i<j.$ Consider the exact sequence $\varepsilon_i:\;0\to K(i)\to M_i\to \Delta_M(i)\to 0.$ Then, we get the exact sequence
{\small $$0\to\Hom_A(\Delta_M(i), \Delta_M(j))\to \Hom_A(M_i,   \Delta_M(j))\to \Hom_A(K(i), \Delta_M(j))\to \Ext^1_A(\Delta_M(i), \Delta_M(j))\to 0 $$}
Since $\Ext^1_A(M_i,   \Delta_M(j))=0$ (see Lemma \ref{lem:M-theta} (d)). Thus, the equivalence between (a) and (c) follows from the above exact sequence and Theorem \ref{MTM} (a).
\

Let $M\in\F(\Delta_M)$ and $(G^M)^{tr}\,S^M$ be a diagonal matrix. 
By Theorem \ref{MTM} and Proposition \ref{MinFD}, we get that \mbox{$C_{\Delta_M}=(G^M)^{tr}\,S^M$} and $M_i\simeq Q(i)$ $\forall\,i.$ Then, by \cite[Theorem 4.2]{MMS2019} and Lemma \ref{lem:M-theta} (b), we conclude that  
$G_B\simeq\bigoplus_{i=1}^t\,\mathbb{Z}/d_i\mathbb{Z}.$
\end{proof}

\begin{theorem}\label{thm:diagonal}
For a basic $\tau$-tilting module $M\in\modu\,(A),$ with a TF-admissible decomposition $M= \bigoplus_{i=1}^n M_i$ such that 
$M\in\F(\Delta_M),$  the following statements are equivalent.
\begin{itemize}
\item[$\mathrm{(a)}$] The matrix $C_{\Delta_M}$ is diagonal.
\item[$\mathrm{(b)}$] $\Hom_A(M_i, M_j)=0$ for $i<j.$
\item[$\mathrm{(c)}$] $\Hom_A(M_i, \Delta_M(j))=0$ for $i<j.$
\item[$\mathrm{(d)}$] For $i<j,$ we have that $\Hom_A(\Delta_M(i), \Delta_M(j))=0$ and the map
$$\Hom_A(K(i),\Delta_M(j))\to\Ext^1_A(\Delta_M(i), \Delta_M(j)),\;f\mapsto f\cdot\eta_i$$
is an isomorphism.
\end{itemize}
Moreover, if one of the above conditions holds true, then $M\simeq {}_AA,$  $A$ is a weakly triangular algebra and $\Hom_A(M_i, K(j))=0$ for 
$i<j.$
\end{theorem}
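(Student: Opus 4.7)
My plan is to reduce the equivalence $\mathrm{(a)}\Leftrightarrow\mathrm{(b)}\Leftrightarrow\mathrm{(c)}\Leftrightarrow\mathrm{(d)}$ to Proposition~\ref{PEDiagCtheta} via the comparison between $\varepsilon_i$ and $\eta_i$ provided by Proposition~\ref{MinFD}, and then to extract the moreover assertion using Corollary~\ref{CEDiagCtheta} together with the $g$-matrix identity of Theorem~\ref{MTM}.

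For the equivalences, the hypothesis $M\in\F(\Delta_M)$ lets Proposition~\ref{MinFD} apply to every summand $M_i$, producing $Q(i)\simeq M_i$, $K(i)\simeq U(i)$, and an isomorphism of the two defining short exact sequences $\varepsilon_i\simeq\eta_i$. Plugging these identifications into Proposition~\ref{PEDiagCtheta} applied to the Ext-projective stratifying system $(\Delta_M,\underline{Q},\leq)$, its four statements become $\mathrm{(a)}$--$\mathrm{(d)}$ of the present theorem verbatim: substitute $M_i$ for $Q(i)$, $K(i)$ for $U(i)$, and $\varepsilon_i$ for $\eta_i$; in $\mathrm{(d)}$ the pushout map along $\eta_i$ is transported to the pushout along $\varepsilon_i$ precisely because the two sequences are isomorphic (alternatively one could invoke Proposition~\ref{PMdiag}(c) for this last piece).

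For the moreover statement, assume $\mathrm{(a)}$. Corollary~\ref{CEDiagCtheta} then gives $U(i)=0$ for every $i$, hence $K(i)=0$ by the previous paragraph, and consequently $\pi_i\colon M_i\xrightarrow{\sim}\Delta_M(i)$ and $\Hom_A(M_i,K(j))=0$ trivially. The same corollary yields $\F(\Delta_M)=\add(M)$ and $\Ext^1_A(\Delta_M(i),\Delta_M(j))=0$ for all $i,j$, so the $M_i$ are pairwise $\Hom$-orthogonal and $\Ext^1$-rigid as a family. Once $M\simeq{}_AA$ is known and the summands are relabelled $M_i\simeq P(i)=Ae_i$, condition $\mathrm{(b)}$ reads $e_jAe_i\simeq\Hom_A(Ae_i,Ae_j)=0$ for $i<j$, which is by definition that $A$ is weakly triangular.

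The remaining point, $M\simeq{}_AA$, is where the work lies, and it is the main obstacle. My proposed route exploits the matrix identity $(G^M)^{tr}S^M=\operatorname{diag}(d_1,\dots,d_n)$ with $d_i=\dim_k\End_A(\Delta_M(i))\ge 1$, obtained from Theorem~\ref{MTM}(c)--(d) combined with the assumption $M\in\F(\Delta_M)$. Since $M$ is $\tau$-tilting, $G^M\in GL_n(\mathbb{Z})$ by \cite[Theorem~5.1]{AIR}, so $(G^M)^{-tr}=S^M\cdot\operatorname{diag}(d_i)^{-1}$ is a non-negative integer matrix (the $i$-th column of $S^M$ equals $d_i$ times the $i$-th column of the integer matrix $(G^M)^{-tr}$). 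The desired conclusion is that $(G^M)^{-tr}$ is a permutation matrix, so that each $g^{M_i}$ is a standard basis vector and therefore $M_i\simeq P(\sigma(i))$ for some permutation $\sigma$, giving $M\simeq{}_AA$. The delicate point is that non-negativity plus unimodularity does not by itself force a matrix to be a permutation (witness $\left(\begin{smallmatrix}1&1\\0&1\end{smallmatrix}\right)$); to close the argument I would combine the pairings of Theorem~\ref{formula} with the $\tau$-rigidity, $\Hom$-orthogonality and $\Ext^1$-rigidity of the $M_i$, arguing that $\add(M)=\F(\Delta_M)$ is forced to behave like a semisimple exact subcategory and thereby excluding any shear-type $G$-matrix.
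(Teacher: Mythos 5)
Your treatment of the equivalences $\mathrm{(a)}\Leftrightarrow\mathrm{(b)}\Leftrightarrow\mathrm{(c)}\Leftrightarrow\mathrm{(d)}$ is exactly the paper's: Proposition~\ref{MinFD} (under $M\in\F(\Delta_M)$) identifies $Q(i)\simeq M_i$, $U(i)\simeq K(i)$ and $\eta_i\simeq\varepsilon_i$, after which Proposition~\ref{PEDiagCtheta} translates verbatim, and the same substitution also yields $\Hom_A(M_i,K(j))=0$ for $i<j$ from the moreover clause of that proposition. That part is fine.

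The gap is in the ``moreover'' assertion $M\simeq{}_AA$, and it is genuine on both of your proposed routes. First, you write that Corollary~\ref{CEDiagCtheta} ``then gives $U(i)=0$ for every $i$,'' but that corollary states that $U(i)=0$ is \emph{equivalent} to the other listed conditions; it does not say any of them holds. A diagonal Cartan matrix does not imply $U(i)=0$, and in general $U(i)\neq 0$ even in the setting of the theorem, so $K(i)=0$ and $\pi_i:M_i\xrightarrow{\sim}\Delta_M(i)$ are not available. In particular the subsequent appeal to $\F(\Delta_M)=\add(M)$ is also unjustified, since it is drawn from the same conditional conclusion of Corollary~\ref{CEDiagCtheta}. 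Second, the $G$-matrix route you then sketch — reading off that $(G^M)^{-tr}$ is a non-negative unimodular integer matrix and trying to force it to be a permutation — is, as you yourself acknowledge, not closed: non-negativity together with unimodularity does not rule out shear-type matrices, and the promised combination of $\Hom$-orthogonality, $\tau$-rigidity and ``semisimple-like'' behaviour of $\add(M)$ is only a heuristic, not a proof. The step $M\simeq{}_AA$ is thus left open.

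The paper closes this step with a different and cleaner argument, coming from $\tau$-tilting mutation. Assuming $\mathrm{(b)}$, the TF-admissibility of the decomposition combined with $\Hom_A(M_i,M_j)=0$ for $i<j$ gives $M_i\notin\Fac\bigl(\bigoplus_{j\neq i}M_j\bigr)$ for \emph{every} $i$. Since $\Fac\bigl(\bigoplus_{j\neq i}M_j\bigr)\subsetneq{}^\perp\bigl(\bigoplus_{j\neq i}\tau M_j\bigr)$ are the two functorially finite torsion classes whose $\tau$-tilting pair contains $\bigoplus_{j\neq i}M_j$, one concludes $M_i\in{}^\perp\bigl(\bigoplus_{j\neq i}\tau M_j\bigr)$, so every mutation of $M$ is descendent. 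If $\Fac(M)\neq\modu(A)$, then \cite[Theorem~3.1]{DIJ} would produce an ascendent mutation, a contradiction; hence $\Fac(M)=\modu(A)$ and, $M$ being basic, $M\simeq{}_AA$. The weakly triangular conclusion then follows (the paper cites \cite[Remark~4.5]{MMS2019}, though your direct reading of $\mathrm{(b)}$ as $e_jAe_i=0$ for $i<j$ is also acceptable once $M\simeq{}_AA$ is established). You should replace your treatment of $M\simeq{}_AA$ by this mutation argument, or supply a complete version of the $G$-matrix route.
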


\begin{proof} 
Let $(\Delta_M, \underline{Q}, \leq)$ be the Ext-projective stratifying system induced by $(\Delta_M,\leq).$ By Theorem \ref{MTM} and Proposition \ref{MinFD}, we get that $C_{\Delta_M}=(G^M)^{tr}\,S^M,$ $U(i)\simeq K(i)$ and $M_i\simeq Q(i)$ $\forall\,i.$ Thus, the fact that $\Hom_A(M_i, K(j))=0$ for $i<j$ (by assuming one of the above conditions) and the equivalences between (a), (b), (c) and (d) follow from Proposition \ref{PEDiagCtheta}.
\

Now, assume that one of the above equivalent conditions holds true. 
Let us proof, firstly, that  $M\simeq {}_AA.$ Indeed, since the decomposition $M= \bigoplus_{i=1}^n M_i$ is TF-admissible, we get from (b) that $M_i\not\in\Fac(\bigoplus_{j\neq i}M_j)$ $\forall\,i\in[1,n].$
 On the other hand, it is known that $\bigoplus_{j\neq i}M_j$ admits two torsion classes, namely, $\Fac(\bigoplus_{j\neq i}M_j)$ and ${}^\perp(\bigoplus_{j\neq i}\tau M_j).$ 
Moreover, since $\Fac(\bigoplus_{j\neq i}M_j)\subsetneq{}^\perp(\bigoplus_{j\neq i}\tau M_j)$ and $M_i\not\in\Fac(\bigoplus_{j\neq i}M_j),$ we have that  $M_i\in {}^\perp(\bigoplus_{j\neq i}\tau M_j)$ and thus the mutation of $M= \bigoplus_{i=1}^t M_i$ at $M_i$ is descendent, for all $i\in[1,n].$ 
Let us show that $\Fac(M)=\modu(A).$ 
Suppose that $\Fac(M)\neq \modu(A).$ Since $(\Fac(M), M^\perp)$ is a torsion pair in $\modu(A),$ it follows from \cite[Theorem 3.1]{DIJ}, that there exists a mutation $M'$ of $M$ such that $\Fac(M)\subsetneq\Fac(M')$ contradicting that any mutation of $M$ is descendent. 
Therefore $\Fac(M)=\modu(A)$ and then $M\simeq {}_AA.$ 
In particular, $A$ is a standardly stratified algebra since  $M_i\simeq Q(i)$ $\forall\,i;$ and by \cite[Remark 4.5]{MMS2019}, we conclude that $A$ is a weakly triangular algebra.
\end{proof}

\section{Examples}\label{Examples}

We finish the paper by calculating the Cartan matrix of stratifying systems induced by $\tau$-rigid modules. 
But before we go to the examples, we recall the notation of Lemma~\ref{lem:M-theta} of the two short exact sequences naturally associated to $\Delta_M(i)$ for $i\in \{1,\dots,t\}$.
$$\varepsilon_i:\; 0\to  K(i) \xrightarrow{\iota_i} M_i \xrightarrow{\pi_i} \Delta_M(i) \rightarrow 0,$$
$$\eta_i:\; 0\to U(i)\xrightarrow{\alpha_i} Q(i)\xrightarrow{\beta_i}\Delta_M(i)\to 0.$$

\begin{example}
Let $A$ be the quotient path $k$-algebra given by the quiver 
$$\xymatrix{
  & 2\ar[dr]& \\
  1\ar[ru] & & 3\ar[ll] }$$
and the third power of the ideal generated by all the arrows.
The Auslander-Reiten quiver of $A$ can be seen in Figure \ref{fig:Ar-quiverA}.
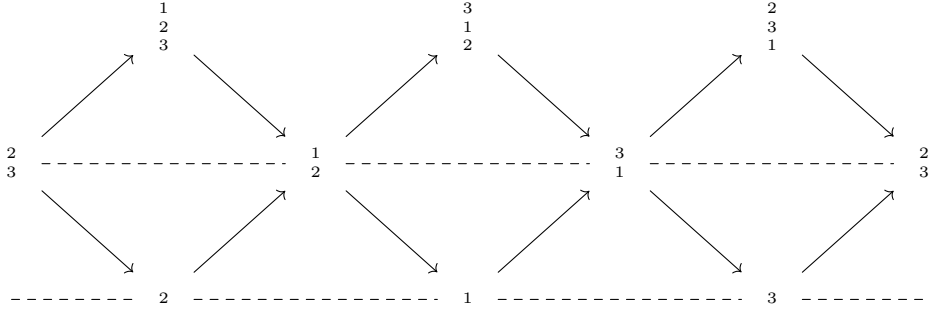
\begin{figure}[h]
    \centering
			\begin{tikzpicture}[line cap=round,line join=round ,x=2.0cm,y=1.8cm]
				\clip(-2.2,-0.1) rectangle (4.1,2.5);
					\draw [->] (-0.8,0.2) -- (-0.2,0.8);
					\draw [->] (1.2,0.2) -- (1.8,0.8);
					\draw [->] (3.2,0.2) -- (3.8,0.8);
					\draw [->] (-1.8,1.2) -- (-1.2,1.8);
					\draw [->] (0.2,1.2) -- (0.8,1.8);
					\draw [->] (2.2,1.2) -- (2.8,1.8);
					\draw [dashed] (-0.8,0.0) -- (0.8,0.0);
					\draw [dashed] (1.2,0.0) -- (2.8,0.0);
					\draw [dashed] (-1.8,1.0) -- (-0.2,1.0);
					\draw [dashed] (0.2,1.0) -- (1.8,1.0);
					\draw [dashed] (2.2,1.0) -- (3.8,1.0);
					\draw [dashed] (-2.0,0.0) -- (-1.2,0.0);
					\draw [dashed] (3.2,0.0) -- (4.0,0.0);
					\draw [->] (0.2,0.8) -- (0.8,0.2);
					\draw [->] (2.2,0.8) -- (2.8,0.2);
					\draw [->] (-1.8,0.8) -- (-1.2,0.2);
					\draw [->] (-0.8,1.8) -- (-0.2,1.2);
					\draw [->] (1.2,1.8) -- (1.8,1.2);
					\draw [->] (3.2,1.8) -- (3.8,1.2);
				
				\begin{scriptsize}
					\draw[color=black] (-1,0) node {$\rep{2}$};
					\draw[color=black] (1,0) node {$\rep{1}$};
					\draw[color=black] (3,0) node {$\rep{3}$};
					\draw[color=black] (-2,1) node {$\rep{2\\3}$};
					\draw[color=black] (0,1) node {$\rep{1\\2}$};
					\draw[color=black] (2,1) node {$\rep{3\\1}$};
					\draw[color=black] (4,1) node {$\rep{2\\3}$};
					\draw[color=black] (-1,2) node {$\rep{1\\2\\3}$};
					\draw[color=black] (1,2) node {$\rep{3\\1\\2}$};
					\draw[color=black] (3,2) node {$\rep{2\\3\\1}$};
				\end{scriptsize}
			\end{tikzpicture}
\caption{The Auslander-Reiten quiver of $A$}
    \label{fig:Ar-quiverA}
\end{figure}
Note that every module is represented by its Loewy series and both copies of $\rep{2\\3}$ should be identified. Thus,  the Auslander-Reiten 
quiver of $A$ has the shape of a cylinder. 

Now take the $\tau$-rigid modules $A = \rep{1\\2\\3} \oplus \rep{2\\3\\1} \oplus \rep{3\\1\\2}$ and $M= \rep{1\\2\\3}\oplus \rep{1\\2} \oplus \rep{1}.$ It can be shown that, in both cases, the $\tau$-tilting modules are written in a TF-admissible fashion. Hence, 
their corresponding induced stratifying system are $\Delta_A = \left\{ \rep{1}, \rep{2}, \rep{3\\1\\2} \right\}$ and $\Delta_M= \left\{ \rep{1\\2\\3}, \rep{1}, \rep{2} \right\}$.

\begin{table}[h]
\centering
\begin{tabular}{|c|c|c|c|c|c|}\hline
 $i$ & $\Delta_A(i)$ & $M_i$  & $K(i)$ & $Q(i)$ & $U(i)$\\\hline
 $1$ & $\rep{1}$ & $\rep{1\\2\\3}$  & $\rep{2\\3}$ & $\rep{1\\2}$ & $\rep{2}$\\\hline
 $2$ & $\rep{2}$ & $\rep{2\\3\\1}$  & $\rep{3\\1}$ & $\rep{2}$ & $\rep{0}$\\\hline
  $3$ & $\rep{3\\1\\2}$ & $\rep{3\\1\\2}$  & $\rep{0}$ & $\rep{3\\1\\2}$ & $\rep{0}$\\\hline
\end{tabular}
\vspace{0.2cm}
\caption{Modules associated to the stratifying system $\Delta_A$}
\label{table:Delta_A}
\end{table}

\begin{table}[h]
\centering
\begin{tabular}{|c|c|c|c|c|c|}\hline
 $i$ & $\Delta_M(i)$ & $M_i$  & $K(i)$ & $Q(i)$ & $U(i)$\\\hline
 $1$ & $\rep{1\\2\\3}$ & $\rep{1\\2\\3}$  & $\rep{0}$ & $\rep{1\\2\\3}$ & $\rep{0}$\\\hline
 $2$ & $\rep{1}$ & $\rep{1\\2}$  & $\rep{2}$ & $\rep{1\\2}$ & $\rep{1}$\\\hline
  $3$ & $\rep{2}$ & $\rep{2}$  & $\rep{0}$ & $\rep{2}$ & $\rep{0}$\\\hline
\end{tabular}
\vspace{0.2cm}
\caption{Modules associated to the stratifying system $\Delta_M$}
\label{table:Delta_M}
\end{table}

\noindent
Using the modules in Table~\ref{table:Delta_A} and Table~\ref{table:Delta_M} and the Auslander-Reiten quiver of $A,$  one can calculate easily the following matrices.
$$
C_{\Delta_A}=\left(\begin{matrix}
1&0&1\\
0&1&1\\
0&0&1
\end{matrix}\right)
\qquad
G^A=\left(\begin{matrix}
1&0&0\\
0&1&0\\
0&0&1
\end{matrix}\right)
\qquad
S^A=\left(\begin{matrix}
1&0&1\\
0&1&1\\
0&0&1
\end{matrix}\right)
\qquad
R^A=\left(\begin{matrix}
0&0&0\\
0&0&0\\
0&0&0
\end{matrix}\right).
$$

$$
C_{\Delta_M}=\left(\begin{matrix}
1&1&0\\
0&1&0\\
0&0&1
\end{matrix}\right)
\qquad
G^M=\left(\begin{matrix}
1&0&0\\
0&1&-1\\
0&1& -1
\end{matrix}\right)
\qquad
S^M=\left(\begin{matrix}
1&1&0\\
1&0&1\\
1&0&0
\end{matrix}\right)
\qquad
R^M=\left(\begin{matrix}
0&0&0\\
0&0&0\\
0&0&0
\end{matrix}\right).
$$
In particular, it is straightforward to verify that $C_{\Delta_A} = (G^A)^{tr} S^A + R^A$ and $C_{\Delta_M} = (G^M)^{tr} S_M + R^M$, as shown in Theorem~\ref{MTM}.
\end{example}

\begin{example} Let $A$ be the quotient path $k$-algebra given by the quiver 
$$\xymatrix{
  3 \ar@<1ex>[r]^{\beta_1}  &  1 \ar@<1ex>[l]^{\alpha_1}\ar@<1ex>[r]^{\beta_2} & 2 \ar@<1ex>[l]^{\alpha_2} }$$
and the ideal generated by the set of relations $\{\beta_1\alpha_1\beta_1,$ $\beta_1\alpha_1-\alpha_2\beta_2,$ $\alpha_2\beta_2\alpha_2\}.$
Now take the $\tau$-rigid module $A = \rep{\;1\;\\3 \;\; 2\\ \;1\;} \oplus \rep{2\\1\\2} \oplus \rep{3\\1\\3}.$ Hence, 
its corresponding induced stratifying system are $\Delta_A = \left\{ \rep{1}, \rep{2\\1\\2}, \rep{3\\1\\3} \right\}.$ 
A quick calculation shows that the modules associated to the stratifying system $\Delta_A$ are the following. 

\begin{table}[h]
\centering
\begin{tabular}{|c|c|c|c|c|c|}\hline
 $i$ & $\Delta_A(i)$ & $M_i$  & $K(i)$ & $Q(i)$ & $U(i)$\\\hline
 $1$ & $\rep{1}$ & $\rep{\;1\;\\3 \;\; 2\\ \;1\;}$  & $\rep{3 \;\; 2\\ \;1\;}$ & $\rep{1}$ & $\rep{0}$\\\hline
 $2$ & $\rep{2\\1\\2}$ & $\rep{2\\1\\2}$  & $\rep{0}$ & $\rep{2\\1\\2}$ & $\rep{0}$\\\hline
  $3$ & $\rep{3\\1\\3}$ & $\rep{3\\1\\3}$  & $\rep{0}$ & $\rep{3\\1\\3}$ & $\rep{0}$\\\hline
\end{tabular}
\vspace{0.2cm}
\caption{Modules associated to the stratifying system $\Delta_A$}
\label{table:Delta_A3}
\end{table}

\noindent
From Table~\ref{table:Delta_A3} we can calculate the matrices associated to $\Delta_{A}$, which are below. 
In particular, it is straightforward to verify that $C_{\Delta_A} = (G^A)^{tr} S^A + R^A,$  as shown in Theorem~\ref{MTM}.
$$
C_{\Delta_A}=\left(\begin{matrix}
1&0&0\\
0&2&0\\
0&0&2
\end{matrix}\right)
\qquad
G^A=\left(\begin{matrix}
1&0&0\\
0&1&0\\
0&0&1
\end{matrix}\right)
\qquad
S^A=\left(\begin{matrix}
1&1&1\\
0&2&0\\
0&0&2
\end{matrix}\right)
\qquad
R^A=\left(\begin{matrix}
0&-1&-1\\
0&0&0\\
0&0&0
\end{matrix}\right).
$$
\end{example}

\footnotesize

\vskip3mm \noindent Octavio Mendoza:\\
Instituto de Matem\'aticas,\\
Universidad Nacional Aut\'onoma de M\'exico,\\
Circuito Exterior, Ciudad Universitaria,\\
M\'exico D.F. 04510, M\'EXICO.

{\tt omendoza@matem.unam.mx}

\vskip3mm \noindent Corina S\'aenz:\\
Departamento de Matem\'aticas, Facultad de Ciencias,\\
Universidad Nacional Aut\'onoma de M\'exico,\\
Circuito Exterior, Ciudad Universitaria,\\
M\'exico D.F. 04510, M\'EXICO.

{\tt ecsv@ciencias.unam.mx}

\vskip3mm \noindent Hipolito Treffinger:\\
Universit\'e de Paris et Sorbonne Universit\'e.\\
CNRS, IMJ-PRG,\\
B\^atiment Sophie Germain,\\
5 rue Thomas Mann,\\
75205 Paris Cedex 13,
FRANCE

{\tt treffinger@imj-prg.fr}


\begin{thebibliography}{10}

\bibitem{AIR}
T.~Adachi, O.~Iyama, and I.~Reiten.
\newblock {{$\tau$}-tilting theory}.
\newblock {\em Compositio Mathematica}, 150(3):415--452, 2014.

\bibitem{Auslander1985}
M.~Auslander and I.~Reiten.
\newblock {Modules determined by their composition factors}.
\newblock {\em Illinois Journal of Mathematics}, 29(2):280--301, 1985.

\bibitem{Auslander1981}
M.~Auslander and S.~O. Smal{\o}.
\newblock {Almost split sequences in subcategories}.
\newblock {\em Journal of Algebra}, 69(2):426--454, 8 1981.

\bibitem{BST2019}
T.~Br\"{u}stle, D.~Smith, and H.~Treffinger.
\newblock Wall and chamber structure for finite-dimensional algebras.
\newblock {\em Adv. Math.}, 354:106746, 31, 2019.

\bibitem{DIJ}
L.~Demonet, O.~Iyama, and G.~Jasso.
\newblock {{$\tau$} -Tilting Finite Algebras, Bricks, and g-Vectors}.
\newblock {\em International Mathematics Research Notices}, 2017(00):1--41,
  2015.

\bibitem{Dlab}
V.~Dlab.
\newblock {Quasi-hereditary algebras revisited}.
\newblock {\em An. St. Univ. Ovidius Constanta}, 4:43--54, 1996.

\bibitem{E05}
K.~Erdmann.
\newblock {Stratifying systems, filtration multiplicities and symmetric
  groups}.
\newblock {\em Journal of algebra Appl.}, 4(5):551--555, 2005.

\bibitem{Erdmann2003}
K.~Erdmann and C.~S{\'{a}}enz.
\newblock {On standardly stratified algebras}.
\newblock {\em Communications in Algebra}, 31(7):3429--3446, 1 2003.

\bibitem{Fu2017}
C.~Fu.
\newblock {c-vectors via {$\tau$}-tilting theory}.
\newblock {\em Journal of Algebra}, 473:194--220, 2017.

\bibitem{SK09}
M.~S. Khalifa.
\newblock {Relative theory in subcategories}.
\newblock {\em Colloq. Math.}, 17(1):29-- 63, 2009.

\bibitem{King1994}
A.~D. King.
\newblock {Moduli of Representations of Finite Dimensional Algebras}.
\newblock {\em The Quarterly Journal of Mathematics}, 45(4):515--530, 1994.

\bibitem{Li13}
L.~Li.
\newblock {Extension algebras of standard modules}.
\newblock {\em Comm. in Algebra}, 41(9):3445-- 3464, 2013.

\bibitem{Marcos2004}
E.~Marcos, O.~Mendoza, and C.~S{\'{a}}enz.
\newblock {Stratifying systems via relative simple modules}.
\newblock {\em Journal of Algebra}, 280:472--487, 2004.

\bibitem{Marcos2005a}
E.~Marcos, O.~Mendoza, and C.~S{\'{a}}enz.
\newblock {Stratifying systems via relative projective modules}.
\newblock {\em Communications in Algebra}, 33(5):1559--1573, 2005.

\bibitem{MMS2019}
E.~Marcos, O.~Mendoza, and C.~S\'aenz.
\newblock {Cokernels of the Cartan matrix and stratifying systems}.
\newblock {\em Comm. in Algebra}, 47(8):3076--3093, 2019.

\bibitem{MMSZ}
E.~Marcos, O.~Mendoza, C.~S{\'{a}}enz, and Z.~R.
\newblock {Quadratic forms associated to stratifying systems}.
\newblock {\em Journal of Algebra}, 302:750--770, 2006.

\bibitem{MaMeSa06}
E.~N. Marcos, O.~Mendoza, and C.~S\'aenz.
\newblock {Applications of Stratifying Systems to the finitistic dimension}.
\newblock {\em Journal of Pure and Applied Algebra}, 205(2), 2006.

\bibitem{Mes16}
O.~Mendoza and V.~Santiago.
\newblock {Homological systems in triangulated categories}.
\newblock {\em Appl. Categorical Struct.}, 24(1), 2016.

\bibitem{Mendoza2019}
O.~Mendoza and H.~Treffinger.
\newblock {Stratifying systems through {$\tau$}-tilting theory}.
\newblock {\em Documenta Math.}, (25):701--720, 2020.

\bibitem{HP06}
H.~R and P.~R.
\newblock {Young modules and filtration multiplicities for Brauer algebras}.
\newblock {\em Mathematische Zeitschrift}, 254(2):333--357, 2006.

\bibitem{Sa18}
V.~Santiago.
\newblock {Estratifiying systems for exact categories}.
\newblock {\em Glasgow Mathematical Journal}, 61:501-- 521, 2018.

\bibitem{Treffinger2021}
H.~Treffinger.
\newblock The size of a stratifying system can be arbitrarily large.
\newblock {\em arXiv:2102.02104}.

\bibitem{TreffingerSurvey}
H.~Treffinger.
\newblock $\tau$-tilting theory -- an introduction.
\newblock {\em arXiv:2106.00426}.

\bibitem{Treffinger2019}
H.~Treffinger.
\newblock On sign-coherence of {$c$}-vectors.
\newblock {\em J. Pure Appl. Algebra}, 223(6):2382--2400, 2019.

\end{thebibliography}
\end{document}